\renewcommand{\epsilon}{\varepsilon}
\newtheorem{theorem}{Theorem}[section]
\newtheorem{lemma}[theorem]{Lemma}
\newtheorem{corollary}[theorem]{Corollary}
\theoremstyle{definition}
\newtheorem{definition}[theorem]{Definition}
\newtheorem{remark}[theorem]{Remark}
\newtheorem{construction}[theorem]{Construction}
\numberwithin{equation}{section}
\title[Lower semicontinuous obstacles]{Lower semicontinuous obstacles for the porous medium equation}
 \author[R.\ Korte]{Riikka Korte}
 \address{Riikka Korte\\
 Aalto University, Department of Mathematics and Systems Analysis\\
 P.O. Box 11100, FI-00076 Aalto, Finland}
 \email{riikka.korte@aallto.fi}
 \author[P. Lehtel\"a]{Pekka Lehtel\"a}
 \address{Pekka Lehtel\"a\\
 Aalto University, Department of Mathematics and Systems Analysis\\
 P.O. Box 11100, FI-00076 Aalto, Finland}
 \email{pekka.lehtela@aalto.fi}
 \author[S. Sturm]{Stefan Sturm}
 \address{Stefan Sturm\\
 Fachbereich Mathematik, Universit\"at Salzburg\\
 Hellbrunner Str. 34, 5020 Salzburg, Austria}
 \email{stefan.sturm@sbg.ac.at}
\thanks{The research is partially supported by the Emil Aaltonen Foundation and Academy of Finland, grant number 308063.}
\newcommand\Rn{\mathbb R^n}
\newcommand\R{\mathbb R}
\newcommand\Q{\mathbb Q}
\newcommand\N{\mathbb N}
\newcommand\bd{\partial}
\newcommand\ph{\varphi}
\newcommand\eps{\varepsilon}
\newcommand\F{\mathcal{F}}
\DeclareMathOperator*{\essinf}{ess\,inf}
\newcommand\dx {\, d}
\providecommand{\ch}[1]{\text{\raise 2pt \hbox{$\chi$}\kern-0.2pt}_{#1}}
\providecommand{\vint}[1]{\mathchoice
          {\mathop{\vrule width 5pt height 3 pt depth -2.5pt
                  \kern -9pt \kern 1pt\intop}\nolimits_{\kern -5pt{#1}}}%
          {\mathop{\vrule width 5pt height 3 pt depth -2.6pt
                  \kern -6pt \intop}\nolimits_{\kern -3pt{#1}}}%
          {\mathop{\vrule width 5pt height 3 pt depth -2.6pt
                  \kern -6pt \intop}\nolimits_{\kern -3pt{#1}}}%
          {\mathop{\vrule width 5pt height 3 pt depth -2.6pt
                  \kern -6pt \intop}\nolimits_{\kern -3pt{#1}}}}
\begin{document}

\begin{abstract}
We deal with the obstacle problem for the porous medium equation in the slow diffusion regime $m>1$. Our main interest is to treat fairly irregular obstacles assuming only boundedness and lower semicontinuity. In particular, the considered obstacles are not regular enough to work with the classical notion of variational solutions, and a different approach is needed. We prove the existence of a solution in the sense of the minimal supersolution lying above the obstacle. As a consequence, we can show that non-negative weak supersolutions to the porous medium equation can be approximated by a sequence of supersolutions which are bounded away from zero.
\end{abstract}

\subjclass[2010]{Primary 35K65; Secondary 35D05, 31C05, 31C45}

\keywords{porous medium equation, obstacle problem, irregular obstacles}

\date{\today}  
\maketitle

\section{Introduction}

In this paper, we study the obstacle problem for the porous medium equation 
\begin{equation} \label{eq:PME}
  \partial_t u - \Delta u^m = 0 \quad \text{in } \Omega_T,
\end{equation}
where $\Omega_T=\Omega\times (0,T)$ denotes the space-time cylinder of height $T>0$ over a bounded open set $\Omega\subset\Rn$. We concentrate on the degenerate regime $m>1$, also known as the slow diffusion case. Our main goal here is to prove the existence of a solution to the obstacle problem for fairly irregular obstacles which are only bounded and lower semicontinuous.   As some general references to the theory of the porous medium equation, we mention \cite{daskalopoulos-kenig,vazquez,wu}. 

We define the solution to the obstacle problem as the least supersolution above the given obstacle, see Definition \ref{def_obstacle_solution} for the details. The supersolutions we have in mind are defined in terms of a parabolic comparison principle, analogously to supercaloric functions in classical potential theory. For that reason, we refer to them as $m$-supercaloric functions.

Obstacle problems particularly provide a method of approximation in the following sense. Since (super)solutions are lower semicontinuous by \cite[Thm.\ 1.1]{lower-semicontinuity}, there exists a nondecreasing approximating sequence of smooth functions which can be used as obstacles to get an approximating sequence of supersolutions. 

Generally, in the case of the porous medium equation, the set where the solution vanishes often causes technical difficulties. Therefore, one would like to have a tool of approximation by supersolutions which are bounded away from zero. This issue has already been recognized by DiBenedetto, Gianazza, and Vespri in their discussion of the proof of \cite[Thm.\ V.17.1]{dibenedetto-harnack}. We address this question constructing such an approximating sequence by using perturbed supersolutions as obstacles.  

Previously, the obstacle problem for the porous medium equation has usually been studied from the point of view of variational inequalities, see \cite{alt-luckhaus,regularity-sol, obstacle,holder-obstacle}. The existence of such variational solutions was established in \cite[Thm.\ 2.7]{obstacle}. However, this approach requires sufficiently regular obstacles. In particular, the existence of the time derivative of the obstacle is needed to guarantee the existence of variational solutions. Thus, for instance a supersolution is not an admissible obstacle in this context. 


A different approach to obstacle problems based on minimal supersolutions above the obstacle has been employed in the case of evolutionary $p$-Laplace type equations for instance in \cite{korte-kuusi-siljander}. In \cite[Cor.\ 3.16]{irregular-obstacles}, it was established that the notion of variational solution coincides with the least supersolution as long as the involved obstacle possesses adequate regularity. By contrast, the connection between those two concepts is not well understood for the porous medium equation. By \cite[Thm.\ 4.12]{teemu-benny-comparison}, we know that the least supersolution is a variational solution, provided that the obstacle is regular enough, whereas the other implication remains an open question. 

To the authors' knowledge, the existence question in the sense of least supersolutions has not been addressed in the literature yet. As our main result, Theorem \ref{main_theorem}, we prove the existence of a solution to the obstacle problem for bounded lower semicontinuous obstacles by approximation with continuous obstacles. For the latter, we construct a solution to the obstacle problem by iteratively building a sequence of functions, analogously to the Schwarz alternating method (see \cite{kkp,equivalence-def}, for instance). However, following the approach in \cite{korte-kuusi-siljander}, we need to slightly modify the technique to ensure that the limiting function stays above the obstacle. Note that the function we construct is often called ``Balayage'' in potential theory. 

Finally, as a consequence of our main theorem, we will show in Theorem \ref{not-main_theorem} that a bounded $m$-supercaloric function can be approximated by a nonincreasing sequence of $m$-supercaloric functions which are bounded away from zero. Our result can be applied to improve earlier Harnack type estimates which were often established under additional assumptions like $u>0$ or $u\geq\varepsilon>0$. An approximation tool that allows to bypass such assumptions was previously suggested in \cite{dibenedetto-harnack}, however a rigorous proof was missing up to now.

\section{Preliminaries}

Before we list the definitions and basic tools used in this paper, we clarify the notation. By $U_{t_1,t_2} \Subset \Omega_T$, we describe the fact that the space-time cylinder $U_{t_1,t_2} = U \times (t_1,t_2)$ is compactly contained in $\Omega_T$, i.\,e.\ $\overline{U_{t_1,t_2}} \subset \Omega_T$. The parabolic boundary of $U_{t_1,t_2}$ is 
\[ \bd_p U_{t_1,t_2}= (\partial U \times (t_1,t_2)) \cup (\overline U \times \{t_1\}),
\]
 and we will write $|U_{t_1,t_2}|$ for the $(n+1)$\:\!--\:\!dimensional Lebesgue measure of such a set. On the contrary, space-time boxes 
 \[
 \prod_{i=1}^n (a_i,b_i) \times (t_1,t_2)
 \]
 will usually be called $Q$. Here, $\prod_{i=1}^n (a_i,b_i)$ indicates the set $(a_1,b_1)\times ... \times (a_n,b_n)$. Moreover, we will refer to the superlevel set 
 \[
 \{ (x,t)\in\Omega_T\colon u(x,t)>\psi(x,t) \}
 \] 
 where the function $u$ lies above the obstacle $\psi$ by $\{ u>\psi \}$. Finally, for a function $u$, we will use the abbreviation $u_+=\max\{u,0\}$.

Next, we define our notion of weak (super)solutions.

\begin{definition} \label{def:supersolutions}
 A non-negative function $u\colon \Omega_T \to [0,\infty]$ is a {\it weak supersolution to the porous medium equation} \eqref{eq:PME} if $u^m \in L^2_{\textup{loc}}(0,T; H^1_{\textup{loc}}(\Omega))$ and $u$ satisfies
\begin{equation}\label{weak-super}
\iint_{\Omega_T} \Big(-u\partial_t\ph + \nabla u^m \cdot \nabla \ph \Big) \, dx \, dt \ge 0
\end{equation}
for any test function $\ph \in C_0^\infty (\Omega_T)$ with $\ph \ge 0$. Similarly, $u$ is a {\it weak subsolution} if the above inequality holds reversed. Moreover, $u$ is a {\it weak solution} if it is a weak sub- and supersolution.
\end{definition}

As a tool, we need the boundary value problem 
\begin{equation} \label{eq:BVP}
\begin{cases}
  \partial_t u - \Delta u^m = 0 ~~\text{in } \Omega_T,\\
  u(\cdot,0)=g(\cdot,0) ~~\text{in } \Omega,\\
u^m - g^m \in L^2(0,T; H_0^1(\Omega))
\end{cases}
\end{equation}
with boundary and initial values determined by a sufficiently regular function $g$. We continue by giving the rigorous definition of weak solutions to the boundary value problem.

\begin{definition}
  A non-negative function $u\in C^0([0,T];L^{m+1}(\Omega))$ is a {\it weak solution to the boundary value problem} \eqref{eq:BVP} with sufficiently regular boundary and initial values $g$ if $u^m - g^m \in L^2(0,T; H_0^1(\Omega))$ and $u$ satisfies
\[
\iint_{\Omega_T} \Big(-u\partial_t\ph + \nabla u^m \cdot \nabla \ph \Big) \, dx \, dt = \int_\Omega g(x,0) \ph(x,0) \, dx
\]
for all smooth test functions $\ph$ having compact support with respect to space and vanishing at $t=T$.
\end{definition}

The next defintion concerns the notion of $m$-supercaloric functions. 

\begin{definition} \label{def:supercaloric}
  A function $u\colon\Omega_T\rightarrow [0,\infty]$ is {\it $m$-supercaloric} if 
  \begin{enumerate}
  \item $u$ is lower semicontinuous;
  \item $u$ is finite in a dense subset of $\Omega_T$;
  \item $u$ satisfies the following comparison principle in every interior cylinder $U_{t_1,t_2}\Subset \Omega_T$: 
If $w\in C^0(\overline{U_{t_1,t_2}})$ is a weak solution to the porous medium equation \eqref{eq:PME} in $U_{t_1,t_2}$ and $u\ge w$ on $\bd_p U_{t_1,t_2}$, then $u\ge w$ in $U_{t_1,t_2}$.
  \end{enumerate}
\end{definition}

\begin{remark}\label{supercaloric-is-weak}
  The weak (super)solutions according to Definition \ref{def:supersolutions} are $m$-supercaloric. Indeed, the lower semicontinuity follows from \cite[Thm.\ 1.1]{lower-semicontinuity}, and the comparison principle is due to \cite[Thm.\ 6.5]{vazquez}. Moreover, in the locally bounded case, $m$-supercaloric functions are weak supersolutions (see \cite[Thm.\ 1.3]{supersol}). 
\end{remark}

Finally, we state the precise definition of a solution to the obstacle problem.

\begin{definition} \label{def_obstacle_solution}
Let $\psi\colon \Omega_T\to [0,\infty)$ be a bounded and lower semicontinuous obstacle. A function $u\colon\Omega_T\to [0,\infty]$ is a {\it solution to the obstacle problem for the porous medium equation} with obstacle $\psi$ if
 	\begin{enumerate}
		\item $u\geq\psi$ in $\Omega_T$;
		\item $u$ is an $m$-supercaloric function in $\Omega_T$;
		\item $u$ is the smallest $m$-supercaloric function in $\Omega_T$ which lies above $\psi$, i.\,e.\ if $v$ is an $m$-supercaloric function in $\Omega_T$ with $v\geq\psi$ in $\Omega_T$, then $v\geq u$ in $\Omega_T$.
	\end{enumerate}
\end{definition}

\begin{remark} \label{rmk:local_boundedness}
We mention the following properties of solutions to the obstacle problem, which can be deduced from the previous definition.
\begin{enumerate}
\item[(i)] Note that solutions to the obstacle problem in the above sense are unique by their definition.
\item[(ii)] We will prove in Lemma \ref{property_sol_obst} that solutions $u$ to the obstacle problem are weak solutions to the porous medium equation in the set $\{ u>\psi \}$.
\item[(iii)] Due to the boundedness assumption on the obstacle function $\psi$, we also know that solutions $u$ to the obstacle problem in the sense of the previous definition are locally bounded by $\sup_{\Omega_T} |\psi|$ (see \cite[Thm.\ 3.1]{teemu-benny-comparison} and Lemma \ref{property_sol_obst}). Therefore, since locally bounded $m$-supercaloric functions are weak supersolutions by \cite[Thm.\ 1.3]{supersol}, we conclude that solutions to the obstacle problem are in fact weak supersolutions to the porous medium equation in $\Omega_T$, too.

\end{enumerate}
\end{remark}

To overcome the fact that constants cannot be added to solutions to the porous medium equation, we need the following result, which can be retrieved from \cite[Lemma 3.2]{perron}.
\begin{lemma}\label{cool-convergence}
  Let $g\in C^0(\overline{\Omega_T})$ be such that $g^m \in L^2(0,T;H^1(\Omega))$ and $0\le g \le M$. Suppose that $\eps>0$ and let $u$ and $u_\eps$ be the solutions to the boundary value problem \eqref{eq:BVP} with boundary and initial values given by $g$ and $g_\eps=(g^m+\eps^m)^{1/m}$, respectively. Then, we have
\begin{equation} \label{cool-convergence-ineq}
\iint_{\Omega_T} (u_\eps-u)(u_\eps^m - u^m) \,dx \,dt \le \eps^m |\Omega_T| (M+1) + \eps |\Omega_T| (M+1)^m. 
\end{equation}
\end{lemma}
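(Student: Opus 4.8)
The plan is to test the weak formulations of the two boundary value problems against the function $u_\eps^m - u^m$, which is admissible because both $u^m$ and $u_\eps^m$ have the same boundary values $g^m$ in the sense of $L^2(0,T;H^1_0(\Omega))$ (both differ from their respective regular data, but $g_\eps^m - g^m = \eps^m$ is constant in space, so the difference $u_\eps^m - u^m$ still lies in $L^2(0,T;H^1_0(\Omega))$). First I would write down the weak identities for $u$ and $u_\eps$ with the common test function $\varphi = u_\eps^m - u^m$ (after the usual Steklov-averaging argument to justify using a test function with only the needed regularity, and to handle the time derivative rigorously), and subtract them. The gradient terms combine to give $\iint_{\Omega_T} |\nabla(u_\eps^m - u^m)|^2 \ge 0$, which we discard. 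This leaves an estimate of the form
\[
\iint_{\Omega_T} \partial_t(u_\eps - u)\,(u_\eps^m - u^m)\,dx\,dt \le \big(\text{initial-time boundary contribution}\big),
\]
where the right-hand side comes from the $\int_\Omega g(x,0)\varphi(x,0)\,dx$ and $\int_\Omega g_\eps(x,0)\varphi(x,0)\,dx$ terms in the definition of weak solution.

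Next I would deal with the parabolic term $\iint \partial_t(u_\eps - u)(u_\eps^m - u^m)$. The standard trick is to introduce the primitive
\[
\mathcal{B}_\eps(s,\sigma) = \int_\sigma^s \big( \tau^m - (\tau^m + \text{const})\big)\,\ldots
\]
— more precisely, one observes that $\partial_t(u_\eps - u)(u_\eps^m - u^m)$ can be written, up to a sign, as a time derivative of a nonnegative quantity plus controllable terms; this is exactly the kind of monotonicity bookkeeping that makes the left-hand side of \eqref{cool-convergence-ineq} appear. Concretely, after integrating in time from $0$ to $T$ and using $C^0([0,T];L^{m+1}(\Omega))$ regularity to make sense of the time slices, the "$t=T$ slice" term is nonnegative (it has the form of a Legendre-type transform of $s\mapsto s^{m+1}$ evaluated at $u_\eps(\cdot,T)$ and $u(\cdot,T)$) and can be dropped, while the "$t=0$ slice" combines with the initial-data terms from the weak formulation. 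Since $u(\cdot,0) = g(\cdot,0)$ and $u_\eps(\cdot,0) = g_\eps(\cdot,0)$ pointwise, at $t=0$ everything is explicit in terms of $g$ and $\eps$.

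The final step is the elementary pointwise estimate. At $t=0$ we must bound an expression involving $g_\eps - g$ and $g_\eps^m - g^m = \eps^m$. Using $0 \le g \le M$, we get $g_\eps = (g^m + \eps^m)^{1/m} \le (M^m + \eps^m)^{1/m} \le M + \eps$ (by concavity/subadditivity of $s \mapsto s^{1/m}$ for $m>1$... here more simply $(a+b)^{1/m}\le a^{1/m}+b^{1/m}$), hence $g_\eps - g \le \eps$, and also $g_\eps^m - g^m = \eps^m$. Plugging these into the $t=0$ contribution and integrating over $\Omega$ (whose relevant measure is controlled by $|\Omega_T|$ up to the factor $T$, which is absorbed — in fact the clean way is to bound $|\Omega| \le |\Omega_T|$ after noting the time integration already produced the right homogeneity, or simply track constants) yields the two terms $\eps^m |\Omega_T|(M+1)$ and $\eps|\Omega_T|(M+1)^m$ on the right-hand side. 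The main obstacle is the rigorous justification of the integration-by-parts in time: neither $u$ nor $u_\eps$ has a weak time derivative in a good space, so one has to run the argument through Steklov averages $[\cdot]_h$, pass the test function $[u_\eps^m - u^m]_h$ (or a mollified variant) through both weak formulations, identify the telescoping nonnegative boundary term at $t=T$, and only then let $h \to 0$; keeping the initial-value terms correct through this limit is the delicate point, but it is by now standard for the porous medium equation and is precisely the computation carried out in \cite[Lemma 3.2]{perron}, from which the stated inequality follows.
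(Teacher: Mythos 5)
There is a genuine gap, and it lies exactly at the heart of the argument: the choice of test function. Testing the (difference of the) weak formulations with $\varphi = u_\eps^m - u^m$ puts the time derivative on $u_\eps-u$, so the parabolic contribution is (formally) $\iint_{\Omega_T}\partial_t(u_\eps-u)\,(u_\eps^m-u^m)\,dx\,dt$ — and the quantity you actually need to bound, the \emph{bulk} space--time integral $\iint_{\Omega_T}(u_\eps-u)(u_\eps^m-u^m)\,dx\,dt$ with no time derivative on it, never appears from this. Your "monotonicity bookkeeping" does not go through: expanding gives $\tfrac{1}{m+1}\partial_t\bigl(u_\eps^{m+1}+u^{m+1}\bigr)-\bigl(\partial_t u_\eps\,u^m+\partial_t u\,u_\eps^m\bigr)$, and the cross terms are not exact time derivatives, so even after Steklov averaging you would at best obtain control of time-slice quantities at $t=T$ in terms of the data at $t=0$, not the left-hand side of \eqref{cool-convergence-ineq}. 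The structure of the right-hand side also betrays the mismatch: in the correct argument the factor $T$ (hidden in $|\Omega_T|=T|\Omega|$) arises from a time integral of the data, whereas in your scheme the $t=0$ contribution only sees $|\Omega|$ (and note $|\Omega|\le|\Omega_T|$ is false unless $T\ge 1$), and there is no visible source for the term $\eps^m|\Omega_T|(M+1)$.

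The intended proof — sketched in the paper right after the lemma and carried out in \cite{perron} — uses the Ole\u\i nik type test function
\[
\ph(x,t)=\int_t^T\bigl(u_\eps^m-u^m-\eps^m\bigr)\,ds \quad (0<t<T),\qquad \ph=0 \text{ for } t\ge T,
\]
which is admissible because $g_\eps^m-g^m=\eps^m$ implies $u_\eps^m-u^m-\eps^m\in L^2(0,T;H^1_0(\Omega))$, and which vanishes at $t=T$. Since $\partial_t\ph=-(u_\eps^m-u^m-\eps^m)$, the term $-(u_\eps-u)\partial_t\ph$ produces the desired bulk integral \emph{directly}, with no time derivative falling on $u_\eps-u$; the gradient term equals $\tfrac12\int_\Omega\bigl|\int_0^T\nabla(u_\eps^m-u^m)\,dt\bigr|^2dx\ge 0$ and is discarded; the leftover pieces are $\eps^m\iint_{\Omega_T}(u_\eps-u)\,dx\,dt\le\eps^m|\Omega_T|(M+1)$ (using $0\le u\le u_\eps\le M+\eps$) and the initial term $\int_\Omega(g_\eps-g)(x,0)\,\ph(x,0)\,dx\le \eps\,|\Omega|\,T(M+1)^m=\eps|\Omega_T|(M+1)^m$, using $g_\eps-g\le\eps$. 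Your elementary estimates in the last step are fine, but they need to be fed into this test-function computation; as written, your route does not produce the claimed inequality.
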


For the interested reader, we mention that the proof utilizes an Ole\u\i nik type test function 
\[
\ph (x,t)=
\begin{cases}
  \int_t^T (u_\eps^m - u^m - \eps^m) \, ds &\text{for } 0<t<T,\\
0 &\text{for } t\ge T.
\end{cases}
\]
We refer to \cite{perron} for the actual proof. Observe that the argument does not work for general porous medium type equations with coefficients.

\begin{remark}\label{cool-convergence-rmk}
  The previous lemma implies that, for $u_\eps$ and $u$ as above, there exists a (nonrelabeled) subsequence of $u_\eps$ such that $u_\eps \to u$ a.\,e.\ in $\Omega_T$. This can be deduced by the elementary inequality $|u_\eps-u|^{m+1} \le  (u_\eps-u)(u_\eps^m - u^m)$, which can be found in \cite[Cor.\ 3.11]{obstacle}. 
\end{remark}

Next, we consider the Poisson modification $u_P\colon\Omega_T\to [0,\infty]$ of a locally bounded $m$-supercaloric function $u$. It is defined with respect to some (sufficiently regular) space-time cylinder $U_{t_1,t_2}\Subset \Omega_T$ as 
\begin{equation}\label{poisson-def}
u_P =
\begin{cases}
  u &\text{in } \Omega_T\setminus \overline{U_{t_1,t_2}},\\
v &\text{in } \overline{U_{t_1,t_2}},
\end{cases}
\end{equation}
where $v$ is the continuous weak solution in $U_{t_1,t_2}$ with boundary values given by $u$ on $\bd_p U_{t_1,t_2}$. The existence of such a weak solution $v$ will be justified in the following lemma. 

\begin{lemma}\label{poisson-lemma}
  Let $u$ be a locally bounded $m$-supercaloric function in $\Omega_T$. The Poisson modification $u_P$ exists and it is an $m$-supercaloric function. 
\end{lemma}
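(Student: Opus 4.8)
The plan is to establish two things: first, that the continuous weak solution $v$ in $U_{t_1,t_2}$ with boundary values $u|_{\bd_p U_{t_1,t_2}}$ actually exists, and second, that the glued function $u_P$ satisfies all three conditions of Definition \ref{def:supercaloric}.

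\textbf{Existence of $v$.} Since $u$ is $m$-supercaloric and locally bounded, it is lower semicontinuous on the compact set $\overline{U_{t_1,t_2}}$, hence it can be written as the pointwise increasing limit of a sequence of continuous functions $g_j$ on $\bd_p U_{t_1,t_2}$; by truncating we may assume $0 \le g_j \le \sup_{U_{t_1,t_2}} u =: M$. Extend each $g_j$ to a continuous function on $\overline{U_{t_1,t_2}}$ (still bounded by $M$) with $g_j^m \in L^2(t_1,t_2;H^1(U))$ — here the regularity assumption on $U_{t_1,t_2}$ enters. Let $v_j$ be the weak solution to the boundary value problem \eqref{eq:BVP} with data $g_j$, which exists and is continuous by standard theory (cf.\ the references on the porous medium equation and the existence results quoted in the excerpt). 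By the comparison principle, the $v_j$ form a nondecreasing sequence bounded by $M$, so $v := \lim_j v_j$ exists pointwise. One then checks via the usual energy estimates (uniform in $j$, thanks to the uniform bound $M$ and the initial/boundary data) that $v$ is again a weak solution; local boundedness gives, by the regularity theory for the porous medium equation, that $v$ is continuous in $U_{t_1,t_2}$, and a barrier/comparison argument shows $v$ attains the boundary values $u$ continuously up to $\bd_p U_{t_1,t_2}$ (at least at the regular boundary points, which suffices since $U_{t_1,t_2}$ was chosen sufficiently regular). Also $v \le u$ in $U_{t_1,t_2}$: each $v_j \le u$ since $g_j \le u$ on the parabolic boundary and $u$ is $m$-supercaloric, so the limit $v \le u$ as well.

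\textbf{$u_P$ is $m$-supercaloric.} Lower semicontinuity: $u_P = u$ outside $\overline{U_{t_1,t_2}}$ and $u_P = v \le u$ inside, and $v$ is continuous up to the parabolic boundary with boundary values matching $u$ there. At an interior point of $U_{t_1,t_2}$, lower semicontinuity is clear from continuity of $v$. At a point of $\bd_p U_{t_1,t_2}$, one uses that $\liminf$ of $u_P$ from inside equals the (continuous) boundary value which is $\le u$ at that point, and $\liminf$ from outside is $\ge u$ at that point by lower semicontinuity of $u$; combining, $\liminf u_P \ge$ the boundary value $= u_P$ there. At the top boundary $\overline U \times \{t_2\}$ one argues similarly. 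Finiteness on a dense set is immediate since $u_P \le u$ and $u_P$ is finite (continuous) throughout $U_{t_1,t_2}$. The comparison principle: let $w \in C^0(\overline{V_{s_1,s_2}})$ be a weak solution in some $V_{s_1,s_2} \Subset \Omega_T$ with $u_P \ge w$ on $\bd_p V_{s_1,s_2}$; we must show $u_P \ge w$ in $V_{s_1,s_2}$. Since $u_P \le u$, we have $u \ge u_P \ge w$ on $\bd_p V_{s_1,s_2}$, so by $m$-supercaloricity of $u$ we get $u \ge w$ in $V_{s_1,s_2}$; this already gives $u_P \ge w$ at points outside $\overline{U_{t_1,t_2}}$. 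For points inside, the cleanest route is to reduce to comparing $w$ with $v$ inside the intersection $U_{t_1,t_2} \cap V_{s_1,s_2}$: on the parabolic boundary of this intersection, $w \le u_P = v$ (on the part lying in $\bd_p V$) and $w \le u = v = u_P$ (on the part lying in $\bd_p U_{t_1,t_2}$, using $w \le u$ there from the previous step together with $v$ matching $u$ on $\bd_p U$), hence $w \le v = u_P$ on all of $\bd_p(U_{t_1,t_2}\cap V_{s_1,s_2})$, and since $v$ is a weak solution and $w$ is a weak solution, the comparison principle for the porous medium equation (Remark \ref{supercaloric-is-weak}) yields $w \le v = u_P$ inside.

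\textbf{Main obstacle.} The delicate point is the existence and boundary regularity of $v$ — specifically, showing that $v$ attains the (merely lower semicontinuous) boundary data of $u$ in the right pointwise sense so that the gluing produces a genuinely lower semicontinuous function, and justifying that $v$ is a weak solution as the increasing limit of the $v_j$. This is where one must invoke the assumption that $U_{t_1,t_2}$ is "sufficiently regular" (so that barriers exist at parabolic boundary points) and the stability of weak solutions under monotone limits with uniform energy bounds. The comparison-principle verification for $u_P$, once $v \le u$ and the boundary-matching of $v$ are in hand, is then a routine patching argument splitting $V_{s_1,s_2}$ along $\bd_p U_{t_1,t_2}$.
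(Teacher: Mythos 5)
Your proposal is correct and follows essentially the same route as the paper: construct $v$ as the increasing limit of continuous weak solutions with continuous (approximating) boundary data, pass to the limit via comparison plus a Harnack-type convergence/stability argument, and then verify lower semicontinuity and the comparison principle for $u_P$ by the same patching of $V_{s_1,s_2}$ along $\bd_p U_{t_1,t_2}$ using $u_P\le u$ and $v=u$ on $\bd_p U_{t_1,t_2}$. The only cosmetic difference is that the paper approximates $u$ by smooth functions on all of $\Omega_T$ (rather than extending boundary data) and cites a specific Harnack convergence lemma for the limit being a continuous weak solution.
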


\begin{proof}
First, we show the existence of the Poisson modification by constructing the function $v$ appearing in \eqref{poisson-def}. As $u$ is lower semicontinuous, there exists a nondecreasing sequence of smooth functions $\eta_k\ge 0$ converging to $u$ pointwise in $\Omega_T$. Let $v_k$ be the continuous weak solution to the boundary value problem \eqref{eq:BVP} in $U_{t_1,t_2}$ with boundary and initial values given by $\eta_k$ on $\bd_p U_{t_1,t_2}$. By the comparison principle, we obtain $v_k \le v_{k+1}\le u$ in $U_{t_1,t_2}$ for every $k\in \N$. Thus, due to the local boundedness of $u$ and the Harnack type convergence theorem \cite[Lemma 3.4]{supersol}, $v=\lim v_k$ is a continuous weak solution in $U_{t_1,t_2}$.  

It remains to prove that $u_P$ is $m$-supercaloric. From the above construction of $v$, it follows that $u_P$ satisfies the finiteness condition (2) in Definition \ref{def:supercaloric}. Next, we will argue that $u_P$ is lower semicontinuous. Obviously, it suffices to consider the points $(x_0,t_0)\in \bd U_{t_1,t_2}$. For those, we calculate
\[
\liminf_{{\substack{(x,t) \to (x_0,t_0)\\(x,t)\in U_{t_1,t_2}}}} u_P(x,t) = \liminf_{{\substack{(x,t) \to (x_0,t_0)\\(x,t)\in U_{t_1,t_2}}}} v(x,t) = v(x_0,t_0)= u_P(x_0,t_0).
\]
Hence, we can conclude the lower semicontinuity of $u_P$. Finally, we will show that $u_P$ satisfies the comparison principle. For that purpose, we let $V_{\tau_1,\tau_2}\Subset \Omega_T$ and consider a weak solution $w\in C^0(\overline{V_{\tau_1,\tau_2}})$. Suppose that $$w\le u_P ~\textup{ on } \bd_pV_{\tau_1,\tau_2}.$$ By construction, we know that  $u_P \le u$, which implies $$w\le u ~\textup{ on } \bd_p V_{\tau_1,\tau_2}.$$ Therefore, by the comparison principle for $u$, we find that $w\le u$ in $V_{\tau_1,\tau_2}$. This takes care of the points $(x,t)\in V_{\tau_1,\tau_2}\setminus U_{t_1,t_2}$, and it remains to consider the set $V_{\tau_1,\tau_2}\cap U_{t_1,t_2}$. The previous consideration particularly shows that $$w\le u=v ~\textup{ on } \bd_pU_{t_1,t_2} \cap V_{\tau_1,\tau_2}.$$ On the other hand, we have $$w\le u_P=v ~\textup{ on } \bd_p V_{\tau_1,\tau_2} \cap U_{t_1,t_2}$$ by assumption. Thus, there holds $$w\le v ~\textup{ on } \bd_p (U_{t_1,t_2}\cap V_{\tau_1,\tau_2}),$$ and the comparison principle implies $$w\le v ~\textup{ in } U_{t_1,t_2}\cap V_{\tau_1,\tau_2}.$$ Together, we can conclude $w\le u_P$ in $V_{\tau_1,\tau_2}$. 
\end{proof}

To complete this section, we establish the following property of solutions to the obstacle problem.

\begin{lemma} \label{property_sol_obst}
Let $\psi\colon \Omega_T\to [0,\infty)$ be a bounded and lower semicontinuous obstacle. Suppose that $u$ is a solution to the obstacle problem in $\Omega_T$. Then, $u$ is a weak solution to the porous medium equation in the set $\{ u>\psi \}$.
\end{lemma}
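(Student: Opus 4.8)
The claim is local, so the plan is to fix an arbitrary point $(x_0,t_0)\in\{u>\psi\}$ and produce a small space-time cylinder $Q\Subset\{u>\psi\}$ around it on which $u$ solves \eqref{eq:PME} weakly; since being a weak solution is a local property (the test functions in Definition \ref{def:supersolutions} have compact support), patching these local solutions together will give the statement. Throughout I may use that $u$ is a locally bounded weak supersolution of the porous medium equation in $\Omega_T$ by Remark \ref{rmk:local_boundedness}(iii), and in particular $u$ is $m$-supercaloric and locally bounded, so the Poisson modification of Lemma \ref{poisson-lemma} is available.

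The core idea is a comparison/minimality argument via the Poisson modification. First I would use the lower semicontinuity of $u-\psi$ (note $u$ is l.s.c.\ and $\psi$ is u.s.c.\ after negation; more carefully, $u$ is l.s.c.\ and $-\psi$ is l.s.c., so $u-\psi$ is l.s.c.) together with $u(x_0,t_0)>\psi(x_0,t_0)$ to find $\delta>0$ and a cylinder $U_{t_1,t_2}\Subset\Omega_T$ containing $(x_0,t_0)$ such that $u>\psi+\delta$ on $\overline{U_{t_1,t_2}}$... wait, this needs care because l.s.c.\ only gives the inequality on a neighborhood if we are slightly more clever: since $u-\psi$ is l.s.c.\ and positive at $(x_0,t_0)$, there is a neighborhood on which $u-\psi>\tfrac12(u-\psi)(x_0,t_0)=:\delta>0$; shrink it to a cylinder $U_{t_1,t_2}\Subset\Omega_T$. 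Now form the Poisson modification $u_P$ of $u$ with respect to $U_{t_1,t_2}$; by Lemma \ref{poisson-lemma} it is $m$-supercaloric in $\Omega_T$, and by construction $u_P\le u$ everywhere. The key point is to show $u_P\ge\psi$ in $\Omega_T$: outside $\overline{U_{t_1,t_2}}$ this is immediate since $u_P=u\ge\psi$ there; inside, $u_P=v$ is the continuous weak solution with boundary data $u\ge\psi+\delta$ on $\partial_p U_{t_1,t_2}$, and one must rule out $v$ dipping below $\psi$. This is exactly where I expect the main obstacle, because $\psi$ is merely bounded and l.s.c., not itself a subsolution, so one cannot simply compare $v$ with $\psi$. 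The way around it is to use the explicit bound from the construction in Lemma \ref{poisson-lemma}: $v=\lim v_k$ where $v_k$ solves the boundary value problem with smooth data $\eta_k\nearrow u$, and one can choose the approximating cylinder (or a slightly smaller subcylinder) together with continuity of $v$ up to $\partial_p U_{t_1,t_2}$ to ensure $v\ge\psi+\delta/2$ on a neighborhood of the lateral/initial boundary; but to control $v$ in the interior against the l.s.c.\ obstacle $\psi$, the cleanest route is instead to pick $U_{t_1,t_2}$ so small that $\operatorname{osc}$ of $v$ is controlled — using that $v$ has boundary data close to $u(x_0,t_0)$ and $u$ is bounded, a barrier/modulus-of-continuity estimate for solutions of \eqref{eq:PME} (available since $v$ is a continuous weak solution with bounded, near-constant boundary data) forces $v>\psi$ on all of $U_{t_1,t_2}$ provided the cylinder is chosen small enough relative to $\delta$ and the sup of $\psi$.

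Granting $u_P\ge\psi$, the conclusion follows quickly from minimality: $u_P$ is an $m$-supercaloric function lying above $\psi$, so by Definition \ref{def_obstacle_solution}(3) we get $u_P\ge u$; combined with $u_P\le u$ this yields $u_P=u$ in $\Omega_T$, hence $u=v$ in $U_{t_1,t_2}$, and $v$ is by construction a weak solution to the porous medium equation in $U_{t_1,t_2}$. Since every point of $\{u>\psi\}$ is interior to such a cylinder, $u$ is a weak solution to \eqref{eq:PME} in $\{u>\psi\}$, which is the assertion. I would organize the write-up as: (1) reduce to a neighborhood of a fixed point and extract $\delta$ and $U_{t_1,t_2}$ from lower semicontinuity of $u-\psi$; (2) construct $u_P$ and record $u_P\le u$; (3) the crux — verify $u_P\ge\psi$, shrinking $U_{t_1,t_2}$ as needed so that the continuous solution $v$ stays above $\psi$, using boundedness of $\psi$ and continuity/oscillation control of $v$; (4) invoke minimality to get $u=u_P$ and conclude.
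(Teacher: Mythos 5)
Your overall strategy --- localize at a point of $\{u>\psi\}$, replace $u$ by its Poisson modification $u_P$ on a small cylinder, show $u_P\ge\psi$, and invoke minimality (Definition \ref{def_obstacle_solution}(3)) to conclude $u=u_P=v$ there --- is exactly the paper's, and steps (2) and (4) of your outline are fine. The genuine gap is in step (3), which you yourself flag as the crux. Your proposed resolution rests on the premise that $v$ has ``boundary data close to $u(x_0,t_0)$,'' so that a barrier/oscillation estimate keeps $v$ above $\psi$ once the cylinder is small. That premise is false: $u$ is only lower semicontinuous, so on every neighbourhood of $(x_0,t_0)$ its values (hence the boundary data of $v$) may be much larger than $u(x_0,t_0)$, and no interior modulus-of-continuity estimate is available for a solution with merely l.s.c.\ boundary data. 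Moreover, since $\psi$ is only l.s.c., it may come arbitrarily close to its supremum at interior points of every such cylinder, so controlling $\operatorname{osc} v$ would not by itself force $v>\psi$. The argument as written does not close. (A side slip in step (1): if $\psi$ is l.s.c.\ then $-\psi$ is \emph{upper} semicontinuous, so $u-\psi$ need not be l.s.c.; the selection of the cylinder requires more care than you give it.)

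The paper's fix is simpler and avoids any quantitative estimate on $v$: choose the cylinder $U_{t_1,t_2}$ together with a \emph{constant} $\lambda>0$ such that $u>\lambda>\psi$ on all of $\overline{U_{t_1,t_2}}$. Since the constant function $\lambda$ is itself a continuous weak solution of \eqref{eq:PME} and $v$ has boundary data $u\ge\lambda$ on $\partial_p U_{t_1,t_2}$, the comparison principle immediately gives $v\ge\lambda>\psi$ throughout $U_{t_1,t_2}$; hence $u_P\ge\psi$ in all of $\Omega_T$, and the minimality argument finishes as you describe. This separation-by-a-constant (comparison with the constant solution $\lambda$, rather than with $\psi$ or with an oscillation bound) is the missing idea; if you repair the write-up, it should replace your step (3), and the existence of such $\lambda$ and $U_{t_1,t_2}$ is the place where the semicontinuity bookkeeping actually has to be done.
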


\begin{proof}
Let $(x_0,t_0)\in \{u> \psi\}$. Since the set $\{u> \psi\}$ is open by the lower semicontinuity of $u$, we can find a number $\lambda >0$ and a neighbourhood $U_{t_1,t_2}\Subset \{u>\psi\}$ of $(x_0,t_0)$ such that $$u > \lambda > \psi ~\textup{ in }\overline{U_{t_1,t_2}}.$$ Recall that $u$ is locally bounded by Remark \ref{rmk:local_boundedness}. Therefore, as the set $U$ can be chosen arbitrarily smooth, we may consider the Poisson modification $u_P$ of $u$ with respect to $U_{t_1,t_2}$, defined as in \eqref{poisson-def}. Since $u_P=u$ on $\bd_p U_{t_1,t_2}$, the comparison principle implies $$u \ge u_P > \lambda > \psi ~\textup{ in } U_{t_1,t_2}.$$ By Lemma \ref{poisson-lemma}, the function $u_P$ is $m$-supercaloric, hence, we know $u=u_P$ in $U_{t_1,t_2}$ from property (3) in Definition \ref{def_obstacle_solution}. Therefore, $u$ is a weak solution to the porous medium equation in $U_{t_1,t_2}$. Finally, since being a solution is a local property, we conclude that $u$ is a weak solution in $\{u>\psi\}$.
\end{proof}

\section{Existence of solutions for continuous obstacles} \label{sec:obstacle_cont}
In this section, we describe how to construct a solution to the obstacle problem with a bounded continuous obstacle. Therefore, we assume throughout this section that $\psi\in C^0(\Omega_T)\cap L^\infty(\Omega_T)$.

\begin{construction} \label{construction}
Our aim is to construct the unique solution to the obstacle problem as the limit of a sequence $(f_j)_{j\in\N_0}$ using a modified Schwarz alternating method. The functions $f_j$ are obtained recursively by solving boundary value problems on a dense countable collection $\F=\{Q_j \subset \Omega_T \colon j \in \N_0\}$ of space-time boxes $Q_j$ ending at $t=T$. For instance, one could consider the collection 
\[
\F = \bigg\{ \prod_{i=1}^n (a_i,b_i) \times (t, T) \subset \Omega_T \colon a_{i}<b_{i}, \, a_i, b_i,t \in \Q\bigg\}.
\] 
Next, we describe the construction in detail. We choose $f_0 = \psi$ in $\Omega_T$, and, for any $j\in \N_0$, we define $f_{j+1}$ as 
\[
f_{j+1} = 
\begin{cases}
 \max\{g_{j}, f_j\}  &\text{in } Q_j,\\
f_j  &\text{in } \Omega_T\setminus Q_j,
\end{cases}
\]
where $g_j$ is the continuous weak solution to the boundary value problem \eqref{eq:BVP} in $Q_j$ with boundary and initial values given by $f_j$. Roughly, the idea there is to redefine $f_j$ as $g_j$ in the box $Q_j$ like in the Schwarz alternating method. However, in order to guarantee that the functions $f_j$ stay above the obstacle, we modify the approach by taking the maximum. Finally, we denote the pointwise limit by
\begin{align} \label{limit_u}
u(x,t) = \lim_{j\to\infty} f_j(x,t).
\end{align}
The limit exists and is well-defined for any $(x,t)\in\Omega_T$, which will be shown in Lemma \ref{lemma32} and Lemma \ref{box-lemma}.
\end{construction}

Next, we collect the basic properties of the previous construction in the following two lemmata.

\begin{lemma} \label{properties_fj}
Let $\psi\in C^0(\Omega_T)\cap L^\infty(\Omega_T)$. Then, the following statements for the generating sequence $(f_j)_{j\in\N_0}$ from Construction \ref{construction} hold:
	\begin{enumerate}
		\item[(i)] The sequence $(f_j)_{j\in\N_0}$ is nondecreasing. More precisely, we have $$f_{j+1} \geq f_j \geq \psi ~\textit{ in } \Omega_T$$ for any $j\in\N_0$.
		\item[(ii)] The sequence $(f_j)_{j\in\N_0}$ is uniformly bounded. More precisely, we have $$|f_j| \leq \sup_{\Omega_T} |\psi| ~\textit{ in } \Omega_T$$ for any $j\in\N_0$. 
		\item[(iii)] The function $f_j$ is continuous for any $j\in\N_0$.
		\item[(iv)] The function $f_j$ is a weak subsolution to the porous medium equation in the set $\{ f_j>\psi \}$ for any $j\in\N$.
	\end{enumerate}
\end{lemma}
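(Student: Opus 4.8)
The plan is to prove statements (i)--(iv) simultaneously by induction on $j$, since $f_{j+1}$ is built only from $f_j$ and the solution $g_j$ of the boundary value problem \eqref{eq:BVP} in $Q_j$ with datum $f_j$. In the base case $j=0$ one has $f_0=\psi$, which is continuous, satisfies $f_0\ge\psi$ and $0\le f_0\le\sup_{\Omega_T}|\psi|$, and for which $\{f_0>\psi\}=\emptyset$; so (i)--(iv) hold at $j=0$ ((iv) vacuously, and it is anyway only claimed for $j\in\N$). For the inductive step assume (i)--(iv) at index $j$. Then (i) is immediate: $f_{j+1}=\max\{g_j,f_j\}\ge f_j$ in $Q_j$ and $f_{j+1}=f_j$ off $Q_j$, so $f_{j+1}\ge f_j\ge\psi$. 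For (ii), put $M:=\sup_{\Omega_T}|\psi|$; as $\psi\ge0$ the hypothesis gives $0\le f_j\le M$, and since the constant function $M$ is a weak solution of \eqref{eq:PME} while $g_j=f_j\le M$ on $\bd_p Q_j$, the comparison principle \cite[Thm.~6.5]{vazquez} yields $g_j\le M$ in $Q_j$, whence $0\le f_{j+1}\le M$. For (iii), recall that the weak solution $g_j$ is continuous up to $\bd_p Q_j\cap\Omega_T$ and there attains the continuous datum $f_j$ (space--time boxes being regular domains for the porous medium equation); hence $f_{j+1}$ is continuous in $Q_j$ as a maximum of continuous functions, continuous in $\Omega_T\setminus\overline{Q_j}$ where it equals $f_j$, and continuous across $\bd_p Q_j\cap\Omega_T$, since there $g_j=f_j$ and both branches tend to $f_j$.

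The substance of the lemma is (iv). Since being a weak subsolution is a local property (localize an arbitrary test function by a finite partition of unity subordinate to a cover of its support), it suffices to produce, for every $p$ in the open set $\{f_{j+1}>\psi\}$, a neighbourhood on which $f_{j+1}$ is a weak subsolution. If $p\notin\overline{Q_j}$, then $f_{j+1}=f_j$ on a neighbourhood $N\subset\{f_{j+1}>\psi\}$ of $p$, where $f_j=f_{j+1}>\psi$, so $N\subset\{f_j>\psi\}$ and $f_{j+1}=f_j$ is a weak subsolution in $N$ by induction. If $p\in Q_j$, take $N\subset Q_j\cap\{f_{j+1}>\psi\}$ and argue by the sign of $g_j-f_j$ at $p$: where $g_j(p)>f_j(p)$, shrinking $N$ gives $f_{j+1}=g_j$ there, a weak solution; where $g_j(p)<f_j(p)$, shrinking $N$ gives $f_{j+1}=f_j$ there, which equals $f_{j+1}>\psi$, hence is a weak subsolution by induction; and where $g_j(p)=f_j(p)$ one has $f_{j+1}(p)=f_j(p)>\psi(p)$, so on a smaller $N$ both $g_j$ and $f_j$ are weak subsolutions and $f_{j+1}=\max\{g_j,f_j\}$ is one as well, a maximum of two weak subsolutions being a weak subsolution.

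The remaining case $p\in\bd_p Q_j\cap\Omega_T$ is the genuine obstacle. Here $g_j=f_j$ at $p$, so $f_{j+1}(p)=f_j(p)>\psi(p)$; fix a neighbourhood $N$ of $p$ with $f_j>\psi$ on $N$, so that $f_j$ is a weak subsolution in $N$ and in particular satisfies the comparison principle from above with continuous weak solutions there. I would show that $f_{j+1}$ satisfies this comparison principle from above in $N$ as well, by the same cylinder-splitting trick used to prove Lemma \ref{poisson-lemma}: given $V=V_{\tau_1,\tau_2}\Subset N$ and a weak solution $w\in C^0(\overline V)$ with $w\ge f_{j+1}$ on $\bd_p V$, comparison for $f_j$ (using $f_{j+1}\ge f_j$) gives $w\ge f_j$ in $V$, which settles $V\setminus Q_j$; then $w\ge g_j$ holds on $\bd_p(V\cap Q_j)$ — on the part coming from $\bd_p V$ because $w\ge f_{j+1}\ge g_j$ there, and on the part lying in $\bd_p Q_j$ because there $g_j=f_j\le w$ — so comparison for the weak solution $g_j$ in $V\cap Q_j$ gives $w\ge g_j$ in $V\cap Q_j$, and altogether $w\ge\max\{g_j,f_j\}=f_{j+1}$ in $V$. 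As $f_{j+1}$ is moreover continuous and locally bounded in $N$, it is a weak subsolution in $N$ by the subsolution counterpart of \cite[Thm.~1.3]{supersol}. This covers all three cases and closes the induction. I expect this last step — verifying the comparison principle across $\bd_p Q_j$ by splitting the test cylinder, exactly as in the proof of Lemma \ref{poisson-lemma} — to be the only delicate point; the rest reduces to monotonicity of the maximum, the comparison principle, and the known regularity of the boundary value problem.
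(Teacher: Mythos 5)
Your parts (i)--(iii) track the paper's proof almost verbatim: monotonicity of the maximum for (i), comparison with the constant weak solution $M=\sup_{\Omega_T}|\psi|$ for (ii), and continuity of $g_j$ up to $\bd_p Q_j$ together with the matching of boundary values for (iii). The genuine divergence is in (iv). The paper disposes of it in one sentence: on $\{f_j>\psi\}$ the function $f_j$ is a maximum of finitely many of the subsolutions $g_i$, hence a subsolution. That argument is clean at points lying in the interior or the exterior of every $\overline{Q_i}$, but --- as you correctly identify --- it is delicate at points of $\bd_p Q_i$, where the set of indices participating in the maximum changes and $g_i$ is not defined on a full neighbourhood; the paper glosses over this. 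Your induction on $j$ with the three-way case analysis, and in particular the treatment of $p\in\bd_p Q_j\cap\Omega_T$ by verifying the comparison principle from above on split cylinders (mirroring the proof of Lemma \ref{poisson-lemma}), is a genuinely different and more careful route. What it buys is an explicit handling of the box boundaries; what it costs is two external inputs. The first, that the maximum of two weak subsolutions is a weak subsolution, is also needed (implicitly) by the paper. The second is more serious: you invoke a ``subsolution counterpart'' of \cite[Thm.\ 1.3]{supersol}, namely that a bounded continuous function obeying the comparison principle from above is a weak subsolution. The paper only ever uses the supersolution direction of that theorem, and for the porous medium equation the sub- and supersolution theories are not symmetric (the equation degenerates at $u=0$), so this is the one step in your argument resting on a result that is neither proved nor cited here. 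It should either be justified separately (e.g.\ by adapting the Poisson-modification argument of \cite{supersol} to decreasing limits of weak solutions) or replaced by a direct weak-formulation computation showing that $\max\{g_j,f_j\}$, glued to $f_j$ outside $Q_j$, satisfies the subsolution inequality across $\bd_p Q_j$, using that $g_j=f_j$ there.
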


\begin{proof}
By definition, we have $f_0= \psi$ and $$f_{j+1}= \max\{g_j,f_j\} \geq f_j$$ for any $j\in\N_0$. This proves assertion (i). Next, we will show (ii). Obviously, there holds $$|f_0|\leq \sup_{\Omega_T}|\psi| ~\textup{ in } \Omega_T.$$ Suppose now that the claim is true for some $j\in\N_0$. Since the functions $f_{j+1}$ are constructed inductively by solving boundary value problems with boundary and initial values given by $f_j$, the comparison principle gives (ii). Since the obstacle $\psi$ and the weak solutions $g_j$ used in the construction are continuous, also the functions $f_j$ are continuous as a maximum of continuous functions such that (iii) is proved. Finally, (iv) follows since in the set $\{f_j>\psi\}$, the function $f_j$ is obtained as a maximum of a finite number of subsolutions $g_i$, $i\in \{ 0, ..., j-1\}$, and therefore it is a subsolution.
\end{proof}

\begin{lemma}\label{lemma32}
Take Construction \ref{construction} with an obstacle function $\psi\in C^0(\Omega_T)\cap L^\infty(\Omega_T)$. Then, the following statements for the limit function $u$ hold:
	\begin{enumerate}
		\item[(i)] The limit $u$ exists and satisfies $u\geq\psi$ in $\Omega_T$.
		\item[(ii)] If $v$ is an $m$-supercaloric function in $\Omega_T$ with $v\geq\psi$ in $\Omega_T$, then $v\geq u$ in $\Omega_T$.
		\item[(iii)] The function $u$ is lower semicontinuous and the set $\{ u>\psi \}$ is open.
	\end{enumerate}
\end{lemma}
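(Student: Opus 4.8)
\textbf{Proof plan for Lemma \ref{lemma32}.}

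The plan is to prove the three assertions in order, leaning on the two properties already established in Lemma \ref{properties_fj}, namely that $(f_j)$ is nondecreasing, bounded by $\sup_{\Omega_T}|\psi|$, continuous, and a subsolution in $\{f_j>\psi\}$. For (i), since $(f_j)$ is nondecreasing by Lemma \ref{properties_fj}(i) and bounded above by Lemma \ref{properties_fj}(ii), the pointwise limit $u(x,t)=\lim_j f_j(x,t)$ exists for every $(x,t)\in\Omega_T$ as a monotone limit of real numbers, and is itself bounded by $\sup_{\Omega_T}|\psi|$. The inequality $u\ge\psi$ is immediate from $f_j\ge f_0=\psi$. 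For (iii), lower semicontinuity of $u$ follows because $u$ is the pointwise supremum of the nondecreasing sequence of continuous functions $f_j$, and a supremum of continuous (a fortiori lower semicontinuous) functions is lower semicontinuous; openness of $\{u>\psi\}$ then follows since $u$ is lower semicontinuous and $\psi$ is continuous, so $u-\psi$ is lower semicontinuous and $\{u>\psi\}=\{u-\psi>0\}$ is open.

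The substantive part is assertion (ii): if $v$ is $m$-supercaloric with $v\ge\psi$, then $v\ge u$. The plan is to show by induction on $j$ that $v\ge f_j$ in $\Omega_T$, which then gives $v\ge\lim_j f_j=u$. The base case is $v\ge f_0=\psi$, which holds by assumption. For the inductive step, assume $v\ge f_j$. Outside $Q_j$ we have $f_{j+1}=f_j\le v$, so there is nothing to prove there. Inside $Q_j$ we must show $v\ge\max\{g_j,f_j\}$; since $v\ge f_j$ already, it suffices to show $v\ge g_j$ in $Q_j$, where $g_j$ is the continuous weak solution in $Q_j$ with boundary and initial values given by $f_j$ on $\bd_p Q_j$. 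This is precisely where the comparison principle built into the definition of $m$-supercaloric functions (Definition \ref{def:supercaloric}(3)) enters: on $\bd_p Q_j$ we have $v\ge f_j=g_j$, and $g_j\in C^0(\overline{Q_j})$ is a weak solution, so the comparison property of the supercaloric function $v$ over the interior cylinder $Q_j\Subset\Omega_T$ yields $v\ge g_j$ in $Q_j$. This completes the induction.

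The main obstacle, and the point requiring care, is the measurability/regularity hypothesis needed to invoke the comparison principle in the inductive step: Definition \ref{def:supercaloric}(3) requires a genuine continuous weak solution $w$ on a cylinder compactly contained in $\Omega_T$, so one must make sure that $Q_j\Subset\Omega_T$ (guaranteed by the choice of the family $\F$ in Construction \ref{construction}, whose boxes are compactly contained in $\Omega_T$) and that $g_j$ is indeed continuous up to $\overline{Q_j}$ with the correct boundary values — this is where the continuity of the obstacle, hence of each $f_j$ by Lemma \ref{properties_fj}(iii), is essential, since it makes $f_j$ a legitimate continuous datum on $\bd_p Q_j$ for the boundary value problem \eqref{eq:BVP}. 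Once these regularity points are in place, the induction is routine, and the monotone passage to the limit in (i) and the semicontinuity argument in (iii) are elementary.
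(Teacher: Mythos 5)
Your argument follows the paper's proof essentially verbatim: (i) is the monotone--bounded limit from Lemma \ref{properties_fj}, (ii) is the same induction $v\geq f_j\Rightarrow v\geq g_j\Rightarrow v\geq f_{j+1}$ via the comparison property of $m$-supercaloric functions, and (iii) is lower semicontinuity of an increasing limit of continuous functions plus continuity of $\psi$. One inaccuracy in your justification of the inductive step should be flagged: you assert that the boxes of the family $\F$ are compactly contained in $\Omega_T$, but by Construction \ref{construction} every $Q_j$ ends at the terminal time $t=T$, so $\overline{Q_j}\not\subset\Omega_T$ and Definition \ref{def:supercaloric}\,(3) does not apply to $Q_j$ verbatim. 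This is not fatal: since $\bd_p\big(Q_j\cap\{t<t_2\}\big)\subset\bd_p Q_j$ for every $t_2<T$, the inequality $v\geq g_j$ on $\bd_p Q_j$ passes to every time-truncated box, which (for boxes whose spatial closure lies in $\Omega$ and whose initial time is positive, the situation relevant here) is an admissible interior cylinder, and letting $t_2\to T$ gives $v\geq g_j$ in all of $Q_j$. The paper is equally terse on this point (``by construction and the comparison principle''), so the substance of your proof matches the published one; just replace the incorrect compact-containment claim by this truncation remark.
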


\begin{proof}
Concerning (i), we observe that the limit $u$ exists at every point $(x,t)\in\Omega_T$ since the sequence $(f_j)_{j\in\N_0}$ is nondecreasing and uniformly bounded by Lemma \ref{properties_fj}. The assertion (ii) follows once we have shown that $v\geq f_j$ in $\Omega_T$ for any $j\in\N_0$. For $j=0$, there is nothing to prove since $f_0=\psi$ in $\Omega_T$. Assume now that there exists some $j\in\N_0$ such that $v\geq f_j$ holds. Then, by construction and the comparison principle, we obtain $v\geq g_j$ in $Q_j$, where $Q_j$ denotes the space-time box used in the construction. Therefore, we conclude that $v\geq f_{j+1}$ in $\Omega_T$, and induction proves that $v\geq u$ in $\Omega_T$. Moreover, as a limit of a nondecreasing sequence of continuous functions, $u$ is lower semicontinuous. This directly implies the openness of the set $\{ u>\psi \}$. \qedhere
\end{proof}

We proceed by gathering further properties of the limit function $u$. The next result contains a comparison principle.

\begin{lemma}\label{lemma33}
The limit $u$ from \eqref{limit_u} satisfies the comparison principle in all space-time boxes $Q\subset \Omega_T$, i.\,e.\ if $w\in C^0(\overline Q)$ is a weak solution to the porous medium equation in $Q$ satisfying $w\leq u$ on $\bd_p Q$, then we have $w\leq u$ in $Q$.
\end{lemma}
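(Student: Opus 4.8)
The statement is a comparison principle for the limit function $u$ on space-time boxes, and the natural strategy is to transfer the comparison principle satisfied by each $f_j$ (which we get for free because, on the superlevel set, $f_j$ is built as a maximum of genuine weak solutions, while on the contact set $f_j=\psi$ is continuous) up to the monotone limit. The key difficulty is that $u$ need not be continuous, so $w\le u$ on $\bd_pQ$ does not immediately give us $w-\eta \le f_j$ on $\bd_pQ$ for the approximating functions; we need to exploit lower semicontinuity of $u$ together with continuity of $w$ on the compact set $\bd_pQ$.

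First I would fix a space-time box $Q\subset\Omega_T$ and a weak solution $w\in C^0(\overline Q)$ with $w\le u$ on $\bd_pQ$, and fix $\eta>0$. The plan is to show $w\le u+\eta$ in $Q$ and then let $\eta\to 0$. On the compact set $\bd_pQ$, the function $u$ is lower semicontinuous and $w$ is continuous, so $u-w$ is lower semicontinuous and nonnegative there; by Dini's theorem applied to the nondecreasing sequence of continuous functions $f_j-w \nearrow u-w$ (wait—$u-w$ is only lsc, so Dini does not literally apply). Instead I would argue pointwise with a compactness/covering argument: for each point $z\in\bd_pQ$ pick $j_z$ with $f_{j_z}(z) > w(z)-\eta$, then use continuity of $f_{j_z}-w$ to get a neighbourhood of $z$ on which $f_{j_z}-w>-\eta$, extract a finite subcover, and take $N$ to be the maximum of the finitely many indices. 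Since $(f_j)$ is nondecreasing, $f_N \ge w-\eta$ on all of $\bd_pQ$, i.e. $f_N + \eta \ge w$ on $\bd_pQ$.

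Next I would invoke the comparison principle for $f_N$ on the box $Q$. One has to be slightly careful since $f_N+\eta$ is not a supersolution (constants cannot be added to solutions of the porous medium equation). The clean way is: $f_N$ is a weak subsolution of the porous medium equation on $\{f_N>\psi\}$ by Lemma \ref{properties_fj}(iv); but for a \emph{comparison} statement against $w$ we actually want $f_N$ to dominate $w$, so we should instead use that $f_N$, being a finite maximum of continuous weak solutions $g_i$ together with $\psi$ — no, this still is not a supersolution. The correct mechanism is to note that on $Q$ the function $f_{N+1}$ restricted to the relevant sub-box where $Q=Q_{j}$ for some index $j\ge N$ equals $\max\{g_j,f_j\}$ with $g_j$ the solution taking boundary data $f_j$; since $\F$ is dense we may assume the box $Q$ itself (or an exhausting sequence of boxes $Q^{(k)}\nearrow Q$) appears in $\F$ infinitely often with arbitrarily large index. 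Then along that subsequence $f_{j}$ agrees on $\bd_pQ^{(k)}$ with its own solution $g_j$, and the comparison principle for the \emph{weak solution} $g_j$ against $w$ (standard, e.g. \cite[Thm.\ 6.5]{vazquez}) combined with $w\le f_j+\eta$ on $\bd_p Q^{(k)}$ and $f_j\le u$ gives $w\le g_j+\eta \le f_{j+1}+\eta\le u+\eta$ inside $Q^{(k)}$.

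Finally I would pass to the limit: letting $k\to\infty$ (so $Q^{(k)}\nearrow Q$) yields $w\le u+\eta$ in $Q$, and then $\eta\to 0$ gives $w\le u$ in $Q$, as claimed. The main obstacle I anticipate is the one flagged above — namely handling the fact that $u$ is only lower semicontinuous (so boundary data must be approached via the covering argument on the compact parabolic boundary) and that additive constants are not allowed, which forces the argument to go through the genuine weak solutions $g_j$ attached to boxes in $\F$ rather than through $f_N$ directly; getting the density of $\F$ to produce boxes exhausting $Q$ with unbounded indices is the technical heart of the proof.
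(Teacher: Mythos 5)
Your overall architecture matches the paper's: a covering/compactness argument on $\bd_p Q$ using lower semicontinuity of $u$ and continuity of $w$ and the $f_j$'s to get $f_{j_0}\ge w-\eps$ near the parabolic boundary, then passing to an inner box from the dense family $\F$ with large index so that the genuine weak solution $g_{j_1}$ attached to that box can be compared with $w$. (Two small remarks there: a fixed box appears only once in the enumeration, so you should not ask for $Q$ itself ``infinitely often''; instead, as in the paper, density gives, for any prescribed lower bound on the index, some rational inner box whose parabolic boundary below $t_2$ lies in the neighbourhood $E_{j_0}=\overline Q\cap\{f_{j_0}>w-\eps\}$ and which contains $Q\setminus E_{j_0}$. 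Also, you need the inequality $w<f_{j_0}+\eps$ on that \emph{inner} box's parabolic boundary, i.e.\ on a full relative neighbourhood of $\bd_pQ$, not merely on $\bd_pQ$; your finite subcover does produce such a neighbourhood, but you should use it explicitly.)

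The genuine gap is the step where you conclude ``$w\le g_j+\eta$ inside $Q^{(k)}$'' from $w\le f_j+\eta=g_j+\eta$ on $\bd_pQ^{(k)}$ ``by the comparison principle for the weak solution $g_j$ against $w$''. For $m>1$ neither $g_j+\eta$ nor $w-\eta$ is a super- or subsolution of the porous medium equation (this is precisely the ``constants cannot be added'' obstruction you flag at the outset), so the comparison principle of \cite[Thm.\ 6.5]{vazquez} does not apply to this pair and the asserted inequality is unjustified; this is the central difficulty of the lemma, not a technicality. The paper resolves it quantitatively: it introduces the auxiliary weak solution $g_{j_1,\eps}$ in $Q_{j_1}\cap\{t<t_2\}$ with perturbed boundary and initial data, compares $w$ with this genuine solution (whose data dominate $w$ on the parabolic boundary) to get $w\le g_{j_1,\eps}$, and then invokes the Ole\u\i nik-type stability estimate of Lemma \ref{cool-convergence}, namely
\[
\iint (w-g_{j_1})_+(w^m-g_{j_1}^m)_+\,dx\,dt\le \iint (g_{j_1,\eps}-g_{j_1})(g_{j_1,\eps}^m-g_{j_1}^m)\,dx\,dt\le C(\eps),
\]
so that letting $\eps\to0$ yields $w\le g_{j_1}$ without ever adding a constant to a solution. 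Your proposal contains no substitute for this stability ingredient, so the $\eta$-offset cannot be removed and the proof does not close as written; to repair it you would need either Lemma \ref{cool-convergence} or an equivalent continuous-dependence estimate for the boundary value problem \eqref{eq:BVP} under the perturbation of the data by $\eta$.
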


\begin{proof}
We fix a space-time box 
\[
Q=\prod_{i=1}^n (a_i,b_i) \times (t_1, t_2).
\]
Let $w\in C^0(\overline{Q})$ be a weak solution in $Q$ with $w\le u$ on $\bd_p Q$. Further, let $\eps>0$ and define the sets 
\[
E_j = \overline{Q} \cap \{f_j > w-\eps\}. 
\]
We observe that $E_j$ is open with respect to the relative topology by the continuity of $f_j$ and $w$. Moreover, we have $\bd_p Q \subset \bigcup_j E_j$ since $u\ge w $ on $\bd_p Q$. Therefore, we can choose $j_0\in\N$ such that $$f_{j_0}>w-\eps ~\textup{ on } \bd_p Q.$$ As the set $E_{j_0}$ is a neighbourhood of $\bd_p Q$ in the relative topology, there exists a number $j_1 \ge j_0$ such that $Q_{j_1}\in \F$ satisfies
\[
  \bd_p Q_{j_1} \cap \{t<t_2\} \subset E_{j_0} ~~\text{ and }~~ Q\setminus E_{j_0} \subset Q_{j_1}.
\]
The definition of $E_{j_0}$ and the monotonicity of the sequence $f_j$ guarantee that
\begin{equation}\label{eq:comparison-inequality}
w < f_{j_0} + \eps \le  f_{j_1} + \eps ~\textup{ on } \bd_p Q_{j_1} \cap \{t<t_2\}.
\end{equation}
In order to prove that $$w \le f_{j_1+1} ~\textup{ in } Q_{j_1} \cap \{t<t_2\},$$ we will show that $w \le g_{j_1}$. Since $g_{j_1}$ is a weak solution in $Q_{j_1}$ with boundary and initial values given by $f_{j_1}$, the inequality \eqref{eq:comparison-inequality} gives us $$w \le g_{j_1} + \eps ~\textup{ on } \bd_p Q_{j_1} \cap \{t<t_2\}.$$ Let $g_{j_1,\eps}$ be a weak solution in $Q_{j_1} \cap \{t<t_2\}$ with boundary and initial values $g_{j_1}+\eps$. The comparison principle implies $$w \le g_{j_1,\eps} ~\textup{ in } Q_{j_1} \cap \{t<t_2\}.$$ Therefore, we know
\[
(w-g_{j_1})_+(w^m-g_{j_1}^m)_+ \le (g_{j_1,\eps}-g_{j_1})(g_{j_1,\eps}^m-g_{j_1}^m).
\]
By Lemma \ref{cool-convergence}, we conclude 
\begin{align*}
  0 &\le \iint_{Q_{j_1} \cap \{t<t_2\}} (w-g_{j_1})_+(w^m-g_{j_1}^m)_+ \, dx \,dt  \\
&\le \iint_{Q_{j_1} \cap \{t<t_2\}}(g_{j_1,\eps}-g_{j_1})(g_{j_1,\eps}^m-g_{j_1}^m) \, dx \, dt \\
&\le C(\eps)
\end{align*}
with a constant $C(\eps)$ as in \eqref{cool-convergence-ineq}. Letting $\eps\to 0$, we have proven that $$w \le g_{j_1} ~\textup{ in } Q_{j_1} \cap \{t<t_2\}.$$ Thus, it follows $$w \le g_{j_1} \le f_{j_1+1}$$ and, by construction, also $w \le u$.  
\end{proof}

The comparison principle from Lemma \ref{lemma33} allows us to prove that the limit function $u$ is independent of the choice of the collection $\F$ used in Construction \ref{construction}. Thus, $u$ is well-defined. 

\begin{lemma}\label{box-lemma}
Let $\psi\in C^0(\Omega_T)\cap L^\infty(\Omega_T)$. Then, the limit $u$ is unique. In particular, it does not depend on the choice of space-time boxes used in Construction \ref{construction}.
\end{lemma}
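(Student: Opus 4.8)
The plan is to show that any two limit functions arising from Construction~\ref{construction} with possibly different collections of space-time boxes coincide, by exploiting the comparison principle of Lemma~\ref{lemma33} together with the minimality property (ii) of Lemma~\ref{lemma32}. Let $u$ denote the limit obtained from a collection $\F$ and let $\tilde u$ denote the limit obtained from another admissible collection $\tilde\F$. Both are lower semicontinuous functions lying above $\psi$ by Lemma~\ref{lemma32}(i),(iii). The key observation is that each of $u$ and $\tilde u$ is in fact $m$-supercaloric: conditions (1) and (2) of Definition~\ref{def:supercaloric} hold by Lemma~\ref{lemma32}(iii) and the uniform boundedness from Lemma~\ref{properties_fj}(ii), so it remains to upgrade the comparison principle from space-time boxes (Lemma~\ref{lemma33}) to arbitrary interior cylinders $U_{t_1,t_2}\Subset\Omega_T$.

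First I would carry out this upgrade. Given a cylinder $U_{t_1,t_2}\Subset\Omega_T$ and a weak solution $w\in C^0(\overline{U_{t_1,t_2}})$ with $w\le u$ on $\bd_p U_{t_1,t_2}$, one wants $w\le u$ inside. The idea is a standard exhaustion: cover $\overline{U}$ from inside by space-time boxes $Q\subset U_{t_1,t_2}$ sharing the top at $t=t_2$ (and, by taking boxes slightly below, approximating the lateral and bottom parts of the parabolic boundary), apply Lemma~\ref{lemma33} on each such box using that $w\le u$ there on $\bd_p Q$ (which, near $\bd_p U_{t_1,t_2}$, follows from the boundary hypothesis and the lower semicontinuity of $u$, possibly after subtracting a small $\eps$ and passing to the limit), and then let the boxes fill out $U_{t_1,t_2}$. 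Care is needed at the initial time $t_1$, where one should shrink the cylinder slightly in time and use continuity of $w$. Once $u$ (and, identically, $\tilde u$) is known to be $m$-supercaloric, the conclusion is immediate: by Lemma~\ref{lemma32}(ii) applied to the construction generating $\tilde u$, since $u$ is $m$-supercaloric with $u\ge\psi$, we get $u\ge\tilde u$; by symmetry $\tilde u\ge u$, hence $u=\tilde u$.

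The main obstacle I expect is the passage from the box comparison principle of Lemma~\ref{lemma33} to the cylinder comparison principle in Definition~\ref{def:supercaloric}, i.e.\ the geometric exhaustion argument. The delicate points are handling the boundary comparison at the interface between neighbouring boxes and at the initial slice $\{t=t_1\}$, and making sure the $\eps$-perturbation trick (comparing $w$ with $w-\eps$ against the $f_j$'s, exactly as in the proof of Lemma~\ref{lemma33}) is compatible with taking $w$ continuous up to $\bd_p U_{t_1,t_2}$. A clean way to organize this is: for fixed $\eps>0$ use lower semicontinuity of $u$ to find, for each point of $\bd_p U_{t_1,t_2}$, a box $Q\subset\Omega_T$ ending at $t=t_2$ on whose parabolic boundary $w<u+\eps$; by compactness finitely many such boxes together with Lemma~\ref{lemma33} and an induction on the (finite) union, sweep out $U_{t_1,t_2}\cap\{t<t_2\}$; then let $\eps\to0$ and $t\to t_2^-$. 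Everything else is a direct invocation of the already-established Lemmata~\ref{properties_fj} and \ref{lemma32}.
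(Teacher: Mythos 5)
Your overall strategy (both limits lie above $\psi$, each dominates the other by a minimality argument, hence they coincide) is sound, but the route you choose makes the crucial step much harder than necessary, and that step is exactly where your argument has a gap. To invoke Lemma \ref{lemma32}\,(ii) you need the limit function to be $m$-supercaloric in the sense of Definition \ref{def:supercaloric}, i.e.\ to satisfy the comparison principle on arbitrary cylinders $U_{t_1,t_2}\Subset\Omega_T$, and you propose to obtain this from the box comparison of Lemma \ref{lemma33} by a covering/exhaustion argument. As sketched, this does not go through: Lemma \ref{lemma33} requires $w\le u$ on $\partial_p Q$, whereas on your covering boxes you only know $w<u+\eps$, and for the porous medium equation you cannot simply pass to $w-\eps$, since constants cannot be added to (or subtracted from) solutions; you would have to rerun the Ole\u\i nik-type argument behind Lemma \ref{cool-convergence} inside each box. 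More seriously, your ``induction on the finite union'' of boxes presupposes a comparison principle on finite unions of space-time boxes, which are not boxes and for which nothing has been established; gluing the boxes along their interfaces is precisely the nontrivial content of the box-to-cylinder upgrade. The paper does not reprove this upgrade but cites it (\cite[Lemma 4.5]{equivalence-def}, see also \cite[Lemma 4.1]{kkp}), and only later, in the proof of Theorem \ref{main_theorem}, not here.

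The point you miss is that for Lemma \ref{box-lemma} no such upgrade is needed, because Construction \ref{construction} only ever solves boundary value problems on space-time boxes. The paper's proof is the induction of Lemma \ref{lemma32}\,(ii) run with $v=u^{(1)}$, with the supercaloric comparison replaced by Lemma \ref{lemma33}: if $u^{(1)}\ge f_j^{(2)}$ in $\Omega_T$, then $u^{(1)}\ge f_j^{(2)}=g_j^{(2)}$ on $\partial_p Q_j^{(2)}$, and since $Q_j^{(2)}$ is a box, Lemma \ref{lemma33} (which holds for all boxes in $\Omega_T$, not only those in the collection generating $u^{(1)}$) gives $u^{(1)}\ge g_j^{(2)}$ in $Q_j^{(2)}$, hence $u^{(1)}\ge f_{j+1}^{(2)}$; induction yields $u^{(1)}\ge u^{(2)}$ and symmetry finishes the proof. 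If you prefer your formulation via Lemma \ref{lemma32}\,(ii), observe that its proof also uses only comparison on boxes, so that argument can be applied verbatim with $v=u^{(1)}$ without ever establishing full $m$-supercaloricity of the limits.
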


\begin{proof}
Assume that $u^{(1)}$ and $u^{(2)}$ are two limits of the construction with generating functions $(f_j^{(i)})_{j\in\N_0}$ and $(g_j^{(i)})_{j\in\N_0}$ for $i\in\{1,2\}$. Moreover, we denote the corresponding collections of space-time boxes by $\{ Q_j^{(i)} \colon j\in\N_0 \}$. Obviously, we have $$u^{(1)} \geq f_0^{(1)} = \psi = f_0^{(2)}.$$ Now, suppose that $$u^{(1)} \geq f_j^{(2)} ~\textup{ in } \Omega_T$$ for some $j\in\N_0$. By construction, the function $g_j^{(2)}$ solves the boundary value problem \eqref{eq:BVP} in $Q_j^{(2)}$ with boundary and initial values $f_j^{(2)}$ on $\partial_p Q_j^{(2)}$. Since $$u^{(1)} \geq f_j^{(2)} = g_j^{(2)} ~\textup{ on } \partial_p Q_j^{(2)},$$ the comparison principle from Lemma \ref{lemma33} gives us $$u^{(1)} \geq g_j^{(2)} ~\textup{ in } Q_j^{(2)}.$$ Therefore, we have proven that $$u^{(1)} \geq \max \{ g_j^{(2)}, f_j^{(2)} \} = f_{j+1}^{(2)} ~\textup{ in } Q_j^{(2)},$$  and induction yields $u^{(1)} \geq u^{(2)}$ in $\Omega_T$. Eventually, interchanging the roles of $u^{(1)}$ and $u^{(2)}$, the claim follows.
\end{proof}

We conclude this section by establishing a comparison principle which displays that, if two obstacles $\psi^{(1)}$ and $\psi^{(2)}$ satisfy $\psi^{(1)}\leq\psi^{(2)}$, then, the associated limits are ordered in the same way.

\begin{lemma}\label{obstacle-comparison-lemma}
Suppose that $\psi^{(1)},\psi^{(2)} \in C^0(\Omega_T)\cap L^\infty (\Omega_T)$ are obstacles. If $\psi^{(1)}\leq \psi^{(2)}$ in $\Omega_T$, then the corresponding limits $u^{(1)}$ and $u^{(2)}$ of Construction \ref{construction} satisfy $u^{(1)}\leq u^{(2)}$ in $\Omega_T$.
\end{lemma}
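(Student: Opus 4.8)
The plan is to prove $u^{(1)} \le u^{(2)}$ by an induction argument comparing the generating sequence $(f_j^{(1)})_{j \in \N_0}$ for the smaller obstacle against the final limit $u^{(2)}$, very much in the spirit of the proof of Lemma~\ref{box-lemma}. For this to work cleanly, I would first fix the \emph{same} countable collection $\F = \{Q_j : j \in \N_0\}$ of space-time boxes in both instances of Construction~\ref{construction}; by Lemma~\ref{box-lemma} this is no loss of generality, since the limits do not depend on the chosen collection. I would then show by induction on $j$ that $f_j^{(1)} \le u^{(2)}$ in $\Omega_T$ for every $j \in \N_0$, which immediately gives $u^{(1)} = \lim_j f_j^{(1)} \le u^{(2)}$.

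The base case is immediate: $f_0^{(1)} = \psi^{(1)} \le \psi^{(2)} \le u^{(2)}$ by hypothesis and by Lemma~\ref{lemma32}(i) applied to $u^{(2)}$. For the inductive step, assume $f_j^{(1)} \le u^{(2)}$ in $\Omega_T$. Recall that $f_{j+1}^{(1)} = \max\{g_j^{(1)}, f_j^{(1)}\}$ in $Q_j$ and $f_{j+1}^{(1)} = f_j^{(1)}$ outside $Q_j$, where $g_j^{(1)}$ is the continuous weak solution to the boundary value problem in $Q_j$ with boundary and initial values $f_j^{(1)}$ on $\bd_p Q_j$. Outside $Q_j$ there is nothing to prove. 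On $Q_j$ it suffices to show $g_j^{(1)} \le u^{(2)}$, since then $f_{j+1}^{(1)} = \max\{g_j^{(1)}, f_j^{(1)}\} \le u^{(2)}$ by the inductive hypothesis. To get $g_j^{(1)} \le u^{(2)}$ in $Q_j$, I note that on $\bd_p Q_j$ we have $g_j^{(1)} = f_j^{(1)} \le u^{(2)}$ by the inductive hypothesis, and $g_j^{(1)} \in C^0(\overline{Q_j})$ is a weak solution in $Q_j$, so the comparison principle for $u^{(2)}$ in space-time boxes from Lemma~\ref{lemma33} (applied to the box $Q_j$ with comparison function $w = g_j^{(1)}$) yields $g_j^{(1)} \le u^{(2)}$ in $Q_j$, as desired. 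This closes the induction, and letting $j \to \infty$ finishes the proof.

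The main thing to be careful about is the logical dependency: the argument uses the comparison principle of Lemma~\ref{lemma33} for the limit $u^{(2)}$, which is available, and it uses Lemma~\ref{box-lemma} to harmonize the box collections, which is also available; so no circularity arises. A minor point worth stating explicitly is that $g_j^{(1)}$ is continuous up to $\overline{Q_j}$, so that it is an admissible competitor $w$ in Lemma~\ref{lemma33}. There are no serious obstacles; the only real ``idea'' is to realize that one should not try to compare the two sequences term by term (the boxes are processed in the same order, so this would in fact also work, but comparing against the limit $u^{(2)}$ directly is cleaner and mirrors Lemma~\ref{box-lemma}).

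\begin{proof}
By Lemma~\ref{box-lemma}, the limit functions $u^{(1)}$ and $u^{(2)}$ do not depend on the collection of space-time boxes used in Construction~\ref{construction}. Therefore, we may assume that both are built using the same collection $\F = \{Q_j \colon j\in\N_0\}$. Denote by $(f_j^{(i)})_{j\in\N_0}$ the generating sequence and by $g_j^{(i)}$ the continuous weak solution to the boundary value problem \eqref{eq:BVP} in $Q_j$ with boundary and initial values $f_j^{(i)}$ on $\bd_p Q_j$, for $i\in\{1,2\}$.

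We claim that
\[
f_j^{(1)} \leq u^{(2)} ~\textup{ in } \Omega_T
\]
for every $j\in\N_0$. For $j=0$, this holds since $f_0^{(1)} = \psi^{(1)} \leq \psi^{(2)} \leq u^{(2)}$ in $\Omega_T$, where the last inequality is due to Lemma~\ref{lemma32}(i). Assume now that $f_j^{(1)} \leq u^{(2)}$ in $\Omega_T$ for some $j\in\N_0$. Outside $Q_j$, we have $f_{j+1}^{(1)} = f_j^{(1)} \leq u^{(2)}$. In $Q_j$, recall that $f_{j+1}^{(1)} = \max\{g_j^{(1)}, f_j^{(1)}\}$, so it suffices to show that $g_j^{(1)} \leq u^{(2)}$ in $Q_j$. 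Since $g_j^{(1)}$ has boundary and initial values $f_j^{(1)}$ on $\bd_p Q_j$, the inductive hypothesis gives
\[
g_j^{(1)} = f_j^{(1)} \leq u^{(2)} ~\textup{ on } \bd_p Q_j.
\]
As $g_j^{(1)} \in C^0(\overline{Q_j})$ is a weak solution to the porous medium equation in $Q_j$, the comparison principle from Lemma~\ref{lemma33}, applied to $u^{(2)}$ in the space-time box $Q_j$, yields $g_j^{(1)} \leq u^{(2)}$ in $Q_j$. Consequently,
\[
f_{j+1}^{(1)} = \max\{g_j^{(1)}, f_j^{(1)}\} \leq u^{(2)} ~\textup{ in } Q_j,
\]
and therefore $f_{j+1}^{(1)} \leq u^{(2)}$ in $\Omega_T$. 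This proves the claim by induction.

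Letting $j\to\infty$, we conclude $u^{(1)} = \lim_{j\to\infty} f_j^{(1)} \leq u^{(2)}$ in $\Omega_T$.
\end{proof}
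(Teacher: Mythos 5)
Your proof is correct. It differs from the paper's in the choice of inductive invariant: the paper fixes the same box collection (via Lemma \ref{box-lemma}, as you do) and then proves $f_j^{(1)}\le f_j^{(2)}$ term by term, using only the ordinary comparison principle for the two weak solutions $g_j^{(1)}$ and $g_j^{(2)}$ of the boundary value problem in $Q_j$ with ordered boundary data. You instead prove $f_j^{(1)}\le u^{(2)}$ directly, invoking the comparison principle of Lemma \ref{lemma33} for the limit function $u^{(2)}$ with competitor $w=g_j^{(1)}$ --- in effect reusing the argument of Lemma \ref{box-lemma} verbatim with $\psi^{(2)}$ in place of $\psi^{(1)}$ on one side. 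Both routes are valid and of the same length; the paper's version is marginally more self-contained at this point (it does not need Lemma \ref{lemma33} beyond its implicit use inside Lemma \ref{box-lemma}), while yours makes transparent that Lemma \ref{obstacle-comparison-lemma} is really the ``two-obstacle'' generalization of Lemma \ref{box-lemma}. Your parenthetical observation that the term-by-term comparison would also work is exactly what the paper does.
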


\begin{proof}
By Lemma \ref{box-lemma}, the limits $u^{(1)}$ and $u^{(2)}$ do not depend on the choice of the collection $\F$. Thus, we may use the same family $\{ Q_j\colon j\in\N_0 \}$ of space-time boxes in the constructions of $u^{(1)}$ and $u^{(2)}$. Suppose that $u^{(i)}$ is generated by functions $f_j^{(i)}$ and $g_j^{(i)}$, where $i\in\{1,2\}$. By assumption, we have $$f_0^{(1)} = \psi_1\le \psi_2= f_0^{(2)} ~\textup{ in } \Omega_T.$$ Therefore, we may proceed by induction to show that the claim holds. Assume that $f_j^{(1)}\le f_j^{(2)}$ in $\Omega_T$ for some $j\in \N_0$. In particular, this implies $$f_j^{(1)}\le f_j^{(2)} ~\textup{ on } \bd_p Q_j.$$ Hence, by the comparison principle, we know that $g_j^{(1)}\le g_j^{(2)}$ in $ Q_j$, which in turn shows that $f_{j+1}^{(1)}\le f_{j+1}^{(2)}$ in $\Omega_T$. By induction, we conclude $u^{(1)} \le u^{(2)}$ in $\Omega_T$. 
\end{proof}

\section{Existence of solutions for lower semicontinuous obstacles} \label{sec:obstacle_lsc}

In this section, we prove our main result, Theorem \ref{main_theorem}. We remark that, unlike in Section \ref{sec:obstacle_cont}, we do not assume continuity for the obstacle $\psi$ anymore. 

\begin{theorem} \label{main_theorem}
Let $\psi$ be a non-negative, bounded, and lower semicontinuous obstacle function in $\Omega_T$. Then, there exists a unique solution $u$ to the obstacle problem in the sense of Definition \ref{def_obstacle_solution}.
\end{theorem}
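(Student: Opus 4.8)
The plan is to reduce the lower semicontinuous case to the continuous case treated in Section~\ref{sec:obstacle_cont} via an approximation-from-below argument. Since $\psi$ is bounded and lower semicontinuous, there is a nondecreasing sequence of continuous (indeed smooth) functions $\psi_k$ with $\psi_k \nearrow \psi$ pointwise in $\Omega_T$; by truncating we may assume $0\le \psi_k \le \sup_{\Omega_T}\psi =: M$ for all $k$. To each $\psi_k$ we apply Construction~\ref{construction} and Lemmata~\ref{lemma32}--\ref{obstacle-comparison-lemma} to obtain the solution $u_k$ to the obstacle problem with continuous obstacle $\psi_k$. By Lemma~\ref{obstacle-comparison-lemma} the sequence $(u_k)$ is nondecreasing, and by Lemma~\ref{properties_fj}(ii) it is uniformly bounded by $M$. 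Hence the pointwise limit $u := \lim_k u_k$ exists in $\Omega_T$ and satisfies $0 \le u \le M$.

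Next I would verify that this $u$ satisfies the three conditions of Definition~\ref{def_obstacle_solution}. Condition~(1) is immediate: $u \ge u_k \ge \psi_k$ for every $k$, so letting $k\to\infty$ gives $u \ge \psi$. For condition~(3), suppose $v$ is $m$-supercaloric with $v \ge \psi \ge \psi_k$; by property (ii) of Lemma~\ref{lemma32} applied to the obstacle $\psi_k$ we get $v \ge u_k$ for all $k$, hence $v \ge u$. The crux is condition~(2), i.e.\ that $u$ is $m$-supercaloric. The finiteness condition is clear since $u \le M$. Lower semicontinuity follows because $u$ is an increasing limit of lower semicontinuous functions $u_k$ (each $u_k$ is lower semicontinuous by Lemma~\ref{lemma32}(iii)). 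The remaining point—the main obstacle—is the comparison principle in arbitrary interior cylinders $U_{t_1,t_2}\Subset\Omega_T$.

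To establish the comparison principle for $u$, let $w\in C^0(\overline{U_{t_1,t_2}})$ be a weak solution with $u \ge w$ on $\bd_p U_{t_1,t_2}$. The strategy mirrors the proof of Lemma~\ref{lemma33}: I would first reduce to space-time boxes, then exploit that the $u_k$ are $m$-supercaloric. For fixed $\eps>0$, since $u = \sup_k u_k$ and each $u_k$ is lower semicontinuous while $w$ is continuous on the compact set $\bd_p U_{t_1,t_2}$, one can extract (using a Dini-type / compactness argument on $\bd_p$) an index $k_0$ with $u_{k_0} > w - \eps$ on $\bd_p U_{t_1,t_2}$. Comparing $w-\eps$ with $u_{k_0}$ on the parabolic boundary and using that $u_{k_0}$ is $m$-supercaloric—but being careful that constants cannot simply be added to solutions of the porous medium equation, so one must instead compare with the solution $w_\eps$ taking boundary data $(w^m+\eps^m)^{1/m}$ on a slightly shrunk cylinder and invoke Lemma~\ref{cool-convergence} together with Remark~\ref{cool-convergence-rmk}, exactly as in Lemma~\ref{lemma33}—yields $w \le u_{k_0} + o(1) \le u + o(1)$ in $U_{t_1,t_2}$. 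Letting $\eps\to 0$ gives $w\le u$, as required. Finally, uniqueness of $u$ is Remark~\ref{rmk:local_boundedness}(i). I expect the delicate part to be the passage on the parabolic boundary from $u = \sup_k u_k$ to a single $u_{k_0}$ dominating $w-\eps$, and the correct handling of the additive-constant issue via Lemma~\ref{cool-convergence}, so I would model that step closely on the argument already carried out in Lemma~\ref{lemma33}.
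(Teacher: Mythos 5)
Your overall reduction coincides with the paper's: approximate $\psi$ from below by continuous obstacles $\psi_k$, let $u_k$ be the solutions from Section~\ref{sec:obstacle_cont}, use Lemma~\ref{obstacle-comparison-lemma} for monotonicity and Lemma~\ref{properties_fj}\,(ii) for the uniform bound, and verify properties (1) and (3) of Definition~\ref{def_obstacle_solution} exactly as you do. The genuine difference is in property (2). The paper does not re-prove the comparison principle for the limit: it observes that each $u_k$ is $m$-supercaloric (upgrading Lemma~\ref{lemma33} from boxes to general subcylinders by citing \cite[Lemma 4.5]{equivalence-def}) and then invokes the Harnack-type convergence theorem \cite[Prop.\ 6.8]{dichotomy}, which says that a nondecreasing, locally finite limit of $m$-supercaloric functions is again $m$-supercaloric. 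You instead verify the comparison principle for $u$ by hand, repeating the $\eps$-perturbation scheme of Lemma~\ref{lemma33}. That route is viable and more self-contained, and your compactness step (extracting a single $k_0$ with $u_{k_0}>w-\eps$ on $\bd_p U_{t_1,t_2}$ from lower semicontinuity of the $u_k$ and continuity of $w$) is correct. Two details would need tightening if you carry it out. First, the object whose boundary data you perturb should not be $w$: as in Lemma~\ref{lemma33}, you need a continuous weak solution $v$ in the box taking the boundary values of $u_{k_0}$ (its Poisson modification, available by Lemma~\ref{poisson-lemma} since $u_{k_0}$ is locally bounded), and then the solution $v_\eps$ with the $\eps$-lifted data of $v$; Lemma~\ref{cool-convergence} controls $v_\eps-v$, while $w\le v_\eps$ comes from the comparison principle for weak solutions. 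Second, since $k_0=k_0(\eps)\to\infty$ as $\eps\to 0$, you cannot conclude $w\le u_{k_0}$ for a fixed index; instead use $v\le u_{k_0}\le u$ to get $\iint (w-u)_+^{m+1}\le \iint (w-v)_+(w^m-v^m)_+\le C(\eps)\to 0$, hence $w\le u$ a.e.\ and then everywhere by continuity of $w$ and lower semicontinuity of $u$. With these repairs your argument closes; the paper's citation simply buys a shorter proof at the cost of an external black box, whereas yours reuses only machinery already developed in Sections~2 and~3.
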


\begin{proof}
Let $\psi$ be as in the statement of the theorem. There exists a nondecreasing sequence $(\psi_k)_{k\in\N}$ of continuous functions such that $$\psi_k\to \psi ~\textup{ a.\,e.\ in } \Omega_T.$$ Our aim is to construct a sequence $(u_k)_{k\in\N}$ of solutions to the obstacle problems with obstacles $\psi_k$ by using the results of Section \ref{sec:obstacle_cont}. We will show that the functions $u_k$ converge to a solution $u$ to the obstacle problem with obstacle $\psi$. In order to argue that the limit function is indeed a solution to the obstacle problem, we have to verify the properties for $u$ which are listed in Definition \ref{def_obstacle_solution}. 

First, Lemma \ref{lemma32}\,(i) yields $u_k\ge \psi_k$ in $\Omega_T$ for any $k\in\N$, and this property persists in the limit. Therefore, we have ensured (1) in Definition \ref{def_obstacle_solution}. Next, suppose that $v$ is an $m$-supercaloric function satisfying $$v\ge \psi\ge \psi_k ~\textup{ in } \Omega_T.$$ Then, by Lemma \ref{lemma32}\,(ii), we know that $u_k\le v$ in $\Omega_T$. Letting $k\to \infty$ shows that $u\le v$ in $\Omega_T$, i.\,e.\ (3) of Definition \ref{def_obstacle_solution}. 

It remains to verify (2). For that purpose, we first observe that the local boundedness of $u_k$ follows from Lemma \ref{properties_fj}\,(ii). Therefore, Lemma \ref{lemma32}\,(iii) and Lemma \ref{lemma33} show that $u_k$ is $m$-supercaloric for each $k\in\N$. We point out that by \cite[Lemma 4.5]{equivalence-def} (see also \cite[Lemma 4.1]{kkp}), the comparison principle for general subcylinders follows from the one proved for space-time boxes in Lemma \ref{lemma33}. By the comparison principle from Lemma \ref{obstacle-comparison-lemma}, the sequence $(u_k)_{k\in\N}$ is nondecreasing, and therefore, we may apply the Harnack type convergence theorem \cite[Prop.\ 6.8]{dichotomy} to conclude that $u_k$ converges to an $m$-supercaloric function $u$. Note that the finiteness condition (2) in Definition \ref{def:supercaloric} holds due to the boundedness of $\psi$. This proves that $u$ is a solution to the obstacle problem in the sense of Definition \ref{def_obstacle_solution}.
\end{proof}

\section{Applications}
In Theorem \ref{not-main_theorem}, we will show that Theorem \ref{main_theorem} gives us a method of approximating bounded $m$-supercaloric functions by a nonincreasing sequence of $m$-supercaloric functions which are bounded away from zero. In a sense, this complements the work of DiBenedetto et al.\ in \cite{dibenedetto-harnack}, where the existence of such a sequence was assumed. Moreover, we discuss the consequences of this result.



\begin{theorem}\label{not-main_theorem}
Let $u\ge 0$ be a bounded $m$-supercaloric function in $\Omega_T$. Then, there exists a sequence of uniformly bounded $m$-supercaloric functions $\{u_\eps\}_{\eps}$ satisfying $u_\eps \ge \eps$ in $\Omega_T$ for any $\eps>0$ such that, in the limit $\eps\to 0$, we have
\[
\begin{cases}
  u_\eps \to u ~~\text{pointwise in } \Omega_T, \\
  u_\eps^q \to u^q ~~\text{in } L^p(\Omega_T) ~ \text{ for }\,q>0,\,\, p\geq 1,\\
  u_\eps^q(\cdot,t_{0}) \to u^q(\cdot,t_{0}) ~~\text{in } L^p(\Omega) ~ \text{ for }\,t_0\in (0,T),\,\, q>0,\,\, p\geq 1,\\  
\nabla u_\eps^m \rightharpoondown \nabla u^m ~~\text{weakly in } L^2_{\rm loc}(\Omega_T,\R^n), \\
\nabla u_\eps^m \to \nabla u^m ~~\text{a.\,e.\ in }\Omega_T.
\end{cases}
\]
\end{theorem}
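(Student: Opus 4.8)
The plan is to build the sequence $\{u_\eps\}$ by using perturbed Poisson-type modifications of $u$ as obstacles and then invoking Theorem \ref{main_theorem}. More precisely, fix an exhaustion $\Omega_T^{(1)} \Subset \Omega_T^{(2)} \Subset \cdots$ of $\Omega_T$ by nice subcylinders, and on each level solve a boundary value problem to approximate $u$ from below by continuous weak solutions bounded away from zero; concretely, for a continuous approximating sequence $\eta_k \nearrow u$ one solves \eqref{eq:BVP} in $\Omega_T^{(k)}$ with initial/lateral data $(\eta_k^m + \eps^m)^{1/m}$, obtaining solutions $v_{k,\eps} \ge \eps$. Taking $\psi_\eps := \sup_k (v_{k,\eps} \wedge (\text{something localized}))$ — or, more cleanly, fixing $\eps$ and letting $\psi_\eps$ be a bounded lower semicontinuous function with $\eps \le \psi_\eps \le u$ that agrees with a positive weak solution on a large subcylinder — one then lets $u_\eps$ be the solution to the obstacle problem with obstacle $\psi_\eps$ furnished by Theorem \ref{main_theorem}. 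Since $\psi_\eps \le u$ and $u$ is itself $m$-supercaloric, minimality gives $u_\eps \le u$; since $\psi_\eps \ge \eps$, the comparison principle forces $u_\eps \ge \eps$; and uniform boundedness follows from $\psi_\eps \le u \le \sup_{\Omega_T} u$ together with Remark \ref{rmk:local_boundedness}.

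Next I would establish the pointwise convergence $u_\eps \to u$. The family $\psi_\eps$ should be arranged to be nonincreasing in $\eps$ and to satisfy $\psi_\eps \nearrow u$ pointwise (or at least a.e., then upgraded): on the region where $u$ is a positive weak solution locally, Lemma \ref{cool-convergence} and Remark \ref{cool-convergence-rmk} give that the perturbed solutions converge to $u$; on the vanishing set of $u$ one uses that $\eps \to 0$. By Lemma \ref{obstacle-comparison-lemma} (extended to the lower semicontinuous setting as in the proof of Theorem \ref{main_theorem}), $\psi_{\eps'} \le \psi_\eps$ for $\eps' < \eps$ implies $u_{\eps'} \le u_\eps$, so $\{u_\eps\}$ is nonincreasing and bounded below by $\psi_\eps$; hence $\lim_{\eps\to 0} u_\eps =: \tilde u$ exists pointwise, satisfies $\psi \le \tilde u \le u$, and is $m$-supercaloric by the Harnack-type convergence theorem (now for nonincreasing sequences, e.g.\ \cite[Prop.\ 6.8]{dichotomy} or \cite[Lemma 3.4]{supersol}). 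Then $\tilde u \ge \psi$ and $\tilde u$ is $m$-supercaloric, so by the minimality characterization of the obstacle solution with obstacle $\psi$ — which is $u$ itself, because $u$ is the smallest $m$-supercaloric function above $\psi$ is NOT automatic, so here instead I use: $u$ is $m$-supercaloric and $\ge \psi$, hence $u \ge$ (obstacle solution) $=: w$; but also each $u_\eps \ge \psi_\eps$, and one checks $w \le u_\eps$ for every $\eps$ by minimality of $w$ applied with the $m$-supercaloric competitor $u_\eps$ once $u_\eps \ge \psi$, giving $w \le \tilde u \le u$ — and finally one shows $\tilde u = u$ directly from $\psi_\eps \nearrow u$ using $u_\eps \ge \psi_\eps$. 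So the key structural point is to choose the obstacles $\psi_\eps$ increasing to $u$ exactly.

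Once pointwise convergence and uniform local boundedness are in hand, the remaining convergences are consequences of the quantitative energy and compactness estimates for the porous medium equation. The monotone pointwise convergence $u_\eps \downarrow u$ together with the uniform bound $0 \le u_\eps \le \sup u$ gives $u_\eps^q \to u^q$ in $L^p(\Omega_T)$ for all $q>0$, $p\ge 1$ by dominated convergence; the time-slice statement $u_\eps^q(\cdot,t_0)\to u^q(\cdot,t_0)$ in $L^p(\Omega)$ follows the same way after recalling that solutions to the obstacle problem lie in $C^0([0,T];L^{m+1})$ (or by using the continuity in time built into the construction via the boundary value problems). For the gradient convergences one localizes: testing \eqref{weak-super} for $u_\eps$ and for $u$, or rather testing the weak formulations in the open set $\{u_\eps > \psi_\eps\}$ where $u_\eps$ is a weak solution by Lemma \ref{property_sol_obst}, against a cutoff times $(u_\eps^m - u^m)$, yields a uniform local $L^2$ bound on $\nabla u_\eps^m$; hence $\nabla u_\eps^m \rightharpoondown \nabla u^m$ weakly in $L^2_{\loc}$, the limit being identified by the already-established $L^2_{\loc}$ convergence of $u_\eps^m$. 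The a.e.\ convergence $\nabla u_\eps^m \to \nabla u^m$ then comes from a monotonicity/Minty-type argument: using that $(u_\eps - u)(u_\eps^m - u^m) \ge 0$ and an Ole\u\i nik-type test function analogous to the one in Lemma \ref{cool-convergence}, one shows $\iint \nabla(u_\eps^m - u^m)\cdot \nabla(u_\eps^m-u^m)\,\zeta \to 0$ locally, giving strong convergence of the gradients and hence a.e.\ convergence along a subsequence — and by uniqueness of the limit, for the full sequence.

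The main obstacle I expect is the construction of the obstacles $\psi_\eps$ themselves: one must produce bounded lower semicontinuous functions $\psi_\eps$ with $\eps \le \psi_\eps \le u$, nonincreasing in $\eps$, and increasing pointwise to $u$ as $\eps \to 0$, while having enough regularity (e.g.\ being suprema of continuous positive weak solutions on an exhausting family of subcylinders) that the comparison and convergence machinery applies. The delicate point is the interplay near the set $\{u = 0\}$: there the perturbation $\eps$ must dominate, and one needs Lemma \ref{cool-convergence} (which crucially quantifies $\iint (u_\eps - u)(u_\eps^m - u^m)$) to pass from the perturbed boundary value problems back to $u$ without losing the lower bound $u_\eps \ge \eps$. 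The gradient-a.e.\ step is the second potentially technical point, since it requires either a Minty-type monotonicity argument or a careful Ole\u\i nik test-function computation localized by a cutoff, and one must be slightly careful that $u_\eps$ is only a weak solution on $\{u_\eps > \psi_\eps\}$ rather than globally — though on the complement $u_\eps = \psi_\eps$ is a positive weak solution by design, so in fact $u_\eps$ is a weak solution everywhere in the region of interest.
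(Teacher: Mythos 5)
There is a genuine gap, and it sits at the very heart of your construction of the obstacles. You require bounded lower semicontinuous functions $\psi_\eps$ with $\eps \le \psi_\eps \le u$, nonincreasing in $\eps$ and increasing pointwise to $u$ as $\eps \to 0$. Such functions simply do not exist whenever $u$ takes values below $\eps$ --- in particular wherever $u$ vanishes --- and this is exactly the situation the theorem is meant to handle (if $u \ge \eps$ already, there is nothing to prove). You notice the tension yourself (``near $\{u=0\}$ the perturbation $\eps$ must dominate''), but once you allow $\psi_\eps > u$ on $\{u<\eps\}$, the function $u$ is no longer admissible as a competitor for the obstacle problem with obstacle $\psi_\eps$, so minimality does \emph{not} give $u_\eps \le u$, and the whole squeeze argument $\psi_\eps \le u_\eps \le u$ with $\psi_\eps \nearrow u$ collapses. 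The subsequent passage in which you try to repair the identification of the limit (invoking the obstacle solution $w$ for $\psi$ and the competitors $u_\eps$) is internally inconsistent for the same reason, and the auxiliary exhaustion-plus-boundary-value-problem construction of $\psi_\eps$ is never pinned down precisely enough to check.

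The paper avoids all of this by approximating $u$ from \emph{above}: it takes $\psi_\eps = (u^m+\eps^m)^{1/m}$, which is automatically bounded, lower semicontinuous, $\ge \eps$, $\ge u$, and nonincreasing as $\eps \downarrow 0$, and lets $u_\eps$ be the solution to the obstacle problem with obstacle $\psi_\eps$ from Theorem \ref{main_theorem}. Then $u_\eps \ge \psi_\eps \ge \eps$ is immediate, uniform boundedness comes from Remark \ref{rmk:local_boundedness}, monotonicity of the obstacles gives a nonincreasing family $u_\eps$ converging pointwise to $u$ from above, and no comparison of $u_\eps$ with $u$ from below is ever needed. The remaining convergences are handled as you suggest in outline --- dominated convergence for $u_\eps^q$ in $L^p$ and on time slices, the Caccioppoli inequality for weak supersolutions (via Remark \ref{supercaloric-is-weak}, so there is no need for your case distinction between $\{u_\eps>\psi_\eps\}$ and its complement) for the uniform local $L^2$ bound and weak convergence of $\nabla u_\eps^m$, and the arguments of the doubly nonlinear paper cited in the text for the a.e.\ convergence of the gradients --- so that part of your plan is in the right spirit, but it cannot be salvaged without first replacing your obstacle construction by one that actually exists, e.g.\ the paper's choice $(u^m+\eps^m)^{1/m}$.
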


\begin{proof}
Without loss of generality, we may assume $\eps\in (0,1)$. For such a fixed number $\eps$, we define $$\psi_\eps = (u^m+\eps^m)^{1/m}.$$ Due to the fact that $u$ is a bounded, lower semicontinuous function in $\Omega_T$, we know that $\psi_\eps$ has those properties, too. Therefore, by Theorem \ref{main_theorem}, there exists a solution $u_\eps$ to the obstacle problem with obstacle $\psi_\eps$. Since the sequence $\psi_\eps$ is bounded and nonincreasing, also $u_\eps$ is bounded and nonincreasing. Thus, we conclude the pointwise convergence $u_\eps\to u$ as $\eps\to 0$. Using the dominated convergence theorem, this implies the convergence of $u_\eps^q$ in $L^p(\Omega_T)$. The same argument works on time-slices. Now, being a bounded $m$-supercaloric function, $u_\eps$ is also a weak supersolution (see Remark \ref{supercaloric-is-weak}), and therefore, the Caccioppoli inequality \cite[Lemma 2.15]{supersol} shows that $\nabla u_\eps^m$ is locally bounded in $L^2(\Omega_T)$ with an estimate independent of $\eps$. Hence, there exists a (nonrelabeled) subsequence of $\nabla u_\eps^m$ which converges weakly in $L^2_{\rm loc}(\Omega_T)$. 
By the arguments of \cite[Section 3.6]{sturm-DNPE}, the above mentioned convergences imply the pointwise convergence of the gradients $\nabla u_\eps^m$ a.\,e.\ in $\Omega_T$. We remark that weak solutions to doubly nonlinear equations were treated there so that the porous medium equation is covered as a special case. Moreover, the same reasoning applies also for weak supersolutions. 
\end{proof}

This result has several useful consequences. For instance, it gives us a method to improve reverse H\"older inequalities and weak Harnack estimates (see \cite[Thm.\ 1.1]{lehtela-harnack} and also \cite[Thm.\ 17.1]{dibenedetto-harnack}). In such estimates, assumptions of the form $u>0$ or $u\ge \varepsilon >0$ are often imposed. Usually, they are of merely technical nature as they arise from the application of a Moser type iteration method, where Caccioppoli estimates with possibly negative powers are utilized. Nevertheless, since constants cannot be added to (super)solutions to the porous medium equation, bypassing these assumptions is more involved in comparison to $p$-Laplace type equations. In \cite{dibenedetto-harnack}, an approximation by supersolutions which are bounded away from zero is suggested to overcome this issue. However, to the authors' knowledge, no method for such an approximation has been presented until now.

By applying Theorem \ref{not-main_theorem}, we can show that the weak Harnack estimate \cite[Thm.\ 1.1]{lehtela-harnack} holds for non-negative supersolutions to the porous medium equation. More precisely, an approximation by supersolutions $u_\eps\ge \eps>0$ allows us to discard the assumption $u>0$. As a consequence, the estimate takes the following form. 

\begin{corollary}\label{weak-harnack}
Let $u$ be a non-negative weak supersolution to \eqref{eq:PME} in $$\Omega_{T}\supset B(x_0,8\rho)\times(0,T).$$ Then, there exist constants $C_1,\,C_2>0$ depending on $m$ and $n$ such that, for almost every $t_0\in(0,T)$, the following inequality holds
\[
\fint_{B(x_0,\rho)} u(x,t_0)\dx x \le   \left( \frac{C_1 \rho^2}{T-t_0}\right ) ^{1/(m-1)} + C_2 \essinf_V u,
\]
where 
\begin{align*}
&V=B(x_0,4\rho) \times (t_0+\tau/2,\, t_0+\tau)\quad \text{and}\\
&\tau=\min \left\{ T-t_0,\, C_1\rho^2 \left ( \fint_{B(x_0,\rho)} u(x,t_0) \dx x \right )^{-(m-1)}\right\}.
\end{align*}
\end{corollary}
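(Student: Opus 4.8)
The plan is to transfer the weak Harnack estimate of \cite[Thm.\ 1.1]{lehtela-harnack}, which is proved under the extra hypothesis $u>0$ (or $u\ge\eps>0$), to an arbitrary non-negative weak supersolution $u$ by an approximation argument based on Theorem \ref{not-main_theorem}. First I would note that $u$ is automatically $m$-supercaloric by Remark \ref{supercaloric-is-weak}, and, after restricting attention to a relatively compact subcylinder and using local boundedness of weak supersolutions, we may assume $u$ is bounded; hence Theorem \ref{not-main_theorem} applies and produces a sequence $\{u_\eps\}_{\eps\in(0,1)}$ of uniformly bounded $m$-supercaloric functions with $u_\eps\ge\eps>0$, $u_\eps\to u$ pointwise and in the various $L^p$ senses stated there. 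Each $u_\eps$ is a bounded $m$-supercaloric function, hence a weak supersolution to \eqref{eq:PME} by Remark \ref{supercaloric-is-weak} (equivalently Remark \ref{rmk:local_boundedness}), so $u_\eps\ge\eps>0$ makes it an admissible function for the cited weak Harnack inequality on the ball $B(x_0,8\rho)\times(0,T)$.

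The second step is to write down, for each $\eps$, the inequality
\[
\fint_{B(x_0,\rho)} u_\eps(x,t_0)\dx x \le \left(\frac{C_1\rho^2}{T-t_0}\right)^{1/(m-1)} + C_2\,\essinf_{V_\eps} u_\eps,
\]
valid for a.\,e.\ $t_0\in(0,T)$, where the intrinsic time-lag is
\[
\tau_\eps=\min\left\{T-t_0,\ C_1\rho^2\left(\fint_{B(x_0,\rho)} u_\eps(x,t_0)\dx x\right)^{-(m-1)}\right\}
\]
and $V_\eps=B(x_0,4\rho)\times(t_0+\tau_\eps/2,\,t_0+\tau_\eps)$. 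The constants $C_1,C_2$ depend only on $m$ and $n$ (this uniformity is the whole point of being able to pass to the limit). I would then pass $\eps\to0$. On the left-hand side, the time-slice convergence $u_\eps(\cdot,t_0)\to u(\cdot,t_0)$ in $L^1(B(x_0,\rho))$ (the $q=p=1$ case of Theorem \ref{not-main_theorem}) gives convergence of the averages $\fint_{B(x_0,\rho)}u_\eps(x,t_0)\dx x\to\fint_{B(x_0,\rho)}u(x,t_0)\dx x$ for a.\,e.\ $t_0$. The first term on the right is independent of $\eps$. Since $t\mapsto\big(\fint u_\eps(x,t_0)\dx x\big)^{-(m-1)}$ also converges, $\tau_\eps\to\tau$ with $\tau$ as in the statement.

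The delicate point — the main obstacle — is the behaviour of the term $\essinf_{V_\eps} u_\eps$, because both the integrand $u_\eps$ and the domain $V_\eps$ move with $\eps$. Here I would exploit monotonicity: the obstacles $\psi_\eps=(u^m+\eps^m)^{1/m}$ are nonincreasing in $\eps$, hence by Lemma \ref{obstacle-comparison-lemma} (and the passage to lower semicontinuous obstacles in Theorem \ref{main_theorem}) the solutions $u_\eps$ are nonincreasing in $\eps$ and satisfy $u_\eps\ge u$ pointwise. Thus for fixed $\eps$ and any $\eps'\le\eps$ we have $u_{\eps'}\ge u$, so $\essinf_{V_{\eps'}}u_{\eps'}\ge\essinf_{V_{\eps'}}u$; combined with the fact that $V_{\eps'}$ converges to $V$ (the domains are nested in the appropriate direction, since $\tau_\eps$ is monotone in $\eps$ because $\fint u_\eps(x,t_0)\dx x$ is), one obtains $\liminf_{\eps\to0}\essinf_{V_\eps}u_\eps\ge\essinf_{V}u$ — but this is the wrong direction. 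Instead one should bound from above: since $u_\eps\ge u$, the cited inequality for $u_\eps$ with $V_\eps\supseteq$ (eventually) a fixed slightly smaller cylinder $V'\Subset V$ gives, using $\essinf_{V_\eps}u_\eps\le\essinf_{V'}u_\eps$ is again the wrong way. The clean route is rather: for any fixed small $\delta>0$, for $\eps$ small enough one has $\tau_\eps\ge\tau-\delta$ and $\tau_\eps\le\tau+\delta$ (two-sided), so $V_\eps$ and the shrunken cylinder $V_\delta:=B(x_0,4\rho)\times(t_0+\tau/2+\delta,\,t_0+\tau-\delta)$ satisfy $V_\delta\subset V_\eps$ for $\eps$ small; hence $\essinf_{V_\eps}u_\eps\le\essinf_{V_\delta}u_\eps$, and monotone convergence $u_\eps\downarrow u$ a.\,e.\ on the fixed set $V_\delta$ gives $\limsup_{\eps\to0}\essinf_{V_\eps}u_\eps\le\essinf_{V_\delta}u$ (essential infimum is upper semicontinuous along monotone decreasing sequences after discarding a null set). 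Letting $\delta\to0$ and using $\essinf_{V_\delta}u\to\essinf_{V}u$ (by inner regularity of the essential infimum with respect to exhausting open subsets of $V$) yields $\limsup_{\eps\to0}\essinf_{V_\eps}u_\eps\le\essinf_{V}u$. Feeding all three limits into the inequality for $u_\eps$ and taking $\limsup$ produces exactly the asserted estimate for $u$; the a.\,e.\ restriction on $t_0$ is the intersection of the a.\,e.\ sets arising at each step, still of full measure. The only care needed is to make sure all of this is carried out on a cylinder compactly contained where $u$ is known bounded, and then to note that the final estimate involves only $B(x_0,8\rho)\times(0,T)$ and passes to the original (possibly unbounded near $t=T$) setting by a standard exhaustion, since both sides are continuous in the relevant parameters.
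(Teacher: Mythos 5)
Your overall strategy is exactly the one the paper intends (the paper itself gives no written proof of the corollary beyond the one-sentence indication preceding it): apply the weak Harnack estimate of Lehtel\"a to the approximants $u_\eps\ge\eps$ furnished by Theorem \ref{not-main_theorem}, use that $C_1,C_2$ depend only on $m,n$, and pass to the limit. Your treatment of the delicate term is correct and is the genuinely nontrivial content: since $\psi_\eps$ decreases as $\eps\downarrow 0$, the $u_\eps$ decrease to $u$ and dominate $u$, the averages $\fint u_\eps(\cdot,t_0)$ decrease to $\fint u(\cdot,t_0)$, hence $\tau_\eps\uparrow\tau$, so the shrunken cylinder $V_\delta$ is eventually contained in $V_\eps$, and the monotone convergence $u_\eps\downarrow u$ on the fixed set $V_\delta$ gives $\limsup_{\eps\to0}\essinf_{V_\eps}u_\eps\le\essinf_{V_\delta}u$, after which $\delta\to0$ is harmless because $V=\bigcup_\delta V_\delta$ up to a null set. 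This is a correct and complete limit passage.

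The one step that would fail as written is the reduction to the bounded case: you justify it by ``local boundedness of weak supersolutions,'' but non-negative weak supersolutions of the porous medium equation with $m>1$ need \emph{not} be locally bounded (only weak \emph{solutions} are), so this claim is false and Theorem \ref{not-main_theorem} cannot be applied to $u$ directly. The correct reduction is by truncation: $\min\{u,k\}$ is again an $m$-supercaloric function (immediate from the comparison-principle definition, since constants are solutions), hence a bounded weak supersolution by \cite[Thm.\ 1.3]{supersol}, and one applies the bounded case to $\min\{u,k\}$ and lets $k\to\infty$. This final limit requires the same kind of care with the moving cylinder that you already carried out for $\eps\to 0$ (now $\tau_k\downarrow\tau$ and $\essinf_{V_k}\min\{u,k\}\le\essinf_{V_k}u$), so your ``standard exhaustion'' remark points in the right direction but rests on the wrong justification; replace the boundedness claim by the truncation argument and the proof is complete.
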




\end{document}